\documentclass[11pt]{amsart}
\usepackage{amsmath,amsfonts,amstext,amssymb,amscd,bezier,amsthm} 
\textwidth = 6.5 in \textheight = 8 in 
\oddsidemargin = 0.0 in
\evensidemargin = 0.0 in 
\parskip = 0.1in
\parindent = 0.0in

\newtheorem{theorem}{Theorem}
 \newtheorem{corollary}{Corollary}
 \newtheorem{lemma}{Lemma}
 \newtheorem{proposition}{Proposition}
 \theoremstyle{definition}
 \newtheorem{definition}{Definition}
 \theoremstyle{remark}
 \newtheorem{remark}{Remark}
 \newtheorem{example}{Example}
 \numberwithin{equation}{section}

\DeclareMathOperator{\Coin}{Coin}
\newcommand{\Z}{\mathbb{Z}} 


\linespread{1.3}

\begin{document}

\title{Computation of Nielsen and Reidemeister coincidence numbers for multiple maps}

\author{Tha\'is F. M. Monis}
\address{Universidade Estadual Paulista (UNESP) \\ Instituto de Geoci\^encias e Ci\^encias Exatas  (IGCE) \\
Av 24A, 1515, Bela Vista\\
CEP 13506-900, Rio Claro-SP, Brazil.}
\email{thais.monis@unesp.br}
\author{Peter Wong}
\address{Department of Mathematics, Bates College, Lewiston, ME 04240, U.S.A.}
\email{pwong@bates.edu}

\thanks{This work is supported by FAPESP Grant 2018/03550-5}

\begin{abstract}
Let $f_1,...,f_k:M\to N$ be maps between closed manifolds, $N(f_1,...,f_k)$ and $R(f_1,...,f_k)$ be the Nielsen and the Reideimeister coincidence numbers respectively. In this note, we relate $R(f_1,...,f_k)$ with $R(f_1,f_2),...,R(f_1,f_k)$. When $N$ is a torus or a nilmanifold, we compute $R(f_1,...,f_k)$ which, in these cases, is equal to $N(f_1,...,f_k)$.
\end{abstract}
\date{\today}
\keywords{Topological coincidence theory, Nielsen coincidence number, nilmanifolds}
\subjclass[2010]{Primary: 55M20; Secondary: 22E25}
\maketitle

\section{Introduction}

A central problem in classical Nielsen coincidence theory is the computation of the Nielsen coincidence number $N(f,g)$ for two maps $f,g: M\to N$ between two closed orientable manifolds of the same dimension. Moreover, the classical Reidemeister number $R(f,g)$ is an upper bound for $N(f,g)$.

In \cite{Christo}, P. C. Staecker stablished a theory for coincidences of multiple maps called {\it Nielsen equalizer theory}: given $k$ maps $f_1 , \ldots , f_k : M^{(k-1)n} \to N^n$  between compact manifolds of dimension $(k-1)n$ and $n$, respectively,  a Nielsen number $N(f_1, \ldots , f_k)$ is defined such that it is a homotopy invariant and a lower bound for the cardinality of the sets $$\text{Coin}(f'_1, \ldots , f'_k)=\{x \in M \ | \ f_1'(x)=\cdots = f_k'(x)\},$$ where $f_i'$ is homotopic to $f_i$. The approach of Staecker to define the essentiality of a coincidence class is via a coincidence index or semi-index.     Independently, Biasi, Libardi \& Monis (\cite{Biasi}) introduced a Lefschetz coincidence class for $k$ maps $f_1, \ldots , f_k: X \to N^n$, $L(f_1, \ldots , f_k) \in H^{(k-1)n}(X; \Z)$, where $N$ is a closed connected orientable $n$-manifold, with the property that   if $L(f_1, \ldots , f_k)$ is a non-trivial element of $H^{(k-1)n}(X; \Z)$ then there is $x\in X$ such that $f_1(x)=\cdots = f_k(x)$.  In \cite{MonSpiez}, Monis \& Spie\. z generalized this Lefschetz class to the case where $N$ is not necessarily orientable by using twisted coefficients and in \cite{MonWon}, Monis \& Wong determined the obstruction class to deform the maps  $f_1 , \ldots , f_k : M^{(k-1)n} \to N^n$ to be coincidence free.  
       
In this paper, we focus on the computation of $R(f_1,...,f_k)$ in terms of the Reidemeister coincidence numbers $R(f_1,f_2), R(f_1,f_3),...,R(f_1,f_k)$. In particular, we study the cases when the target manifold is either a torus or a nilmanifold.

We should point out that in the case of positive codimension, other Nielsen type invariants via normal bordism have been introduced and Wecken type theorems have been proven. However, these invariants in general are not readily computable. In this work, the Nielsen coincidence number $N(F,G)$ (and $N(f_1,...,f_k)$ for multiple maps) we use is the geometric invariant that is index free. Thus, the calculation is carried out at the fundamental group level.    

We thank the anonymous referee for his/her comments, the question related to the divisibility result in section 5, and the suggestion for extending Theorem \ref{jiang-type-estimate}.     
            
\section{Nielsen coincidence number for multiple maps}  

In this section, we will define a geometric Nielsen coincidence number for multiple maps following the approach of \cite{Brooks1}. Throughout this section, $X$ is a connected finite CW-complex and $Y$  is a manifold without boundary.

\begin{definition} Let $f_1, f_2, \ldots , f_k: X \to Y$ be maps. Two points $x_0, x_1 \in \text{Coin}(f_1, \ldots , f_k)$ are called Nielsen equivalent as coincidences with respect to $f_1, \ldots , f_k$ if there is a path $\gamma: [0,1] \to X$ such that $\gamma(0)=x_0$, $\gamma (1)=x_1$ and $f_1(\gamma)$ is homotopic to $f_j (\gamma)$ relative to the endpoints, $j=2, \ldots ,k$. Such relation defines an equivalence relation on $\text{Coin}(f_1, \ldots , f_k)$ and each equivalence class is called a  Nielsen coincidence class of $(f_1, \ldots , f_k)$.

\end{definition}


\begin{definition} Let $f_1, \ldots , f_k : X \to Y$ be maps and  $\{f_1^t\}, \{f_2^t\}, \ldots , \{f_k^t\}$ homotopies of $f_1=f_1^0$, $f_2=f_2^0, \ldots , f_k=f_k^0$, respectively. Let $\mathcal{F}_1$ be a Nielsen coincidence class of $(f_1, \ldots , f_k)$
and  $\mathcal{F}_2$  a Nielsen coincidence class of $(f_1^1, \ldots , f_k^1)$. We say that $\mathcal{F}_1$ and $\mathcal{F}_2$ are $\{f_1^t\}, \{f_2^t\}, \ldots , \{f_k^t\}$-related if there are $x\in \mathcal{F}_1$, $x' \in \mathcal{F}_2$ and a path $\gamma: [0,1] \to X$ with $\gamma(0)=x$, $\gamma(1)=x'$ such that the paths $t \mapsto f_1^t(\gamma(t))$ and $t \mapsto f_i^t(\gamma(t))$ are homotopic
relative to the endpoints, $i=2, \ldots , k$.

\end{definition}

\begin{definition} A Nielsen coincidence class $\mathcal{F}$ of $(f_1, \ldots , f_k)$ is said to be (geometric) essential if for any homotopies $\{f_1^t\}, \{f_2^t\}, \ldots , \{f_k^t\}$ of $f_1=f_1^0$, $f_2=f_2^0, \ldots , f_k=f_k^0$,  there is a Nielsen coincidence class of $(f_1^1, \ldots , f_k^1)$ to which it is  $\{f_1^t\}, \{f_2^t\}, \ldots , \{f_k^t\}$-related. Otherwise, it is called (geometric) inessential. 

The (geometric) Nielsen coincidence number of $(f_1, \ldots , f_k)$, $N(f_1, \ldots , f_k)$, is the number of (geometric) essential Nielsen coincidence classes of $(f_1, \ldots , f_k)$.

\end{definition}

A clear property with the above definition is that, for any permutation $\sigma \in S_k$ on $\{1,...,k\}$, 
$N(f_1,...,f_k)=N(f_{\sigma(1)}, ..., f_{\sigma(k)})$.

\

%
%
%

\begin{remark} Let $\mathcal{F}$ be an inessential Nielsen coincidence class of $(f_1, \ldots , f_k)$. Roughly speaking, $\mathcal{F}$ {\it disappears} under some homotopy. More precisely, it means that there are homotopies  $\{f_1^t\}, \{f_2^t\}, \ldots , \{f_k^t\}$ of $f_1=f_1^0$, $f_2=f_2^0, \ldots , f_k=f_k^0$, respectively, such that every $x\in \mathcal{F}$ is not $\{f_1^t\}, \{f_2^t\}, \ldots , \{f_k^t\}$-related to any coincidence $x'\in \text{Coin}(f_1^1, \ldots , f_k^1)$.
On the other hand, $\mathcal{F}$ is essential if and only if for any homotopies $\{f_1^t\}, \{f_2^t\}, \ldots , \{f_k^t\}$ of $f_1=f_1^0$, $f_2=f_2^0, \ldots , f_k=f_k^0$, respectively, there exists $x' \in \text{Coin}(f_1^1, \ldots , f_k^1)$ and a path $\gamma: [0,1] \to X$ with $\gamma(0)=x \in \mathcal{F}$, $\gamma(1)=x'$ such that $t \mapsto f_1^t(\gamma(t))$ and $t \mapsto f_i^t(\gamma(t))$ are homotopic paths relative to the endpoints, $i=2, \ldots , k$.

\end{remark}

\begin{remark} Let $F, G : X \to Y^{k-1}$ be defined by $F=(f_1, f_1, \ldots , f_1)$ and $G=(f_2, f_3, \ldots , f_k)$. It is not difficult to see that $\text{Coin}(F,G) = \text{Coin}(f_1, \ldots , f_k)$ and that the Nielsen coincidence classes of $(f_1, \ldots , f_k)$ coincide with the Nielsen coincidence classes of $(F,G)$. In the next result, we prove that $\mathcal{F}$ is essential as a coincidence class of $(f_1, \ldots , f_k)$ if and only if it is essential as a coincidence class of $(F,G)$. Therefore,  $N(f_1, \ldots , f_k) = N(F, G)$.
\end{remark}

\begin{proposition}\label{formula1} $N(f_1, \ldots , f_k) = N(F,G)$.

\end{proposition}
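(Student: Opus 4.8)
The plan is to deduce everything from the Remark that precedes the statement. There it is noted that $\text{Coin}(F,G)=\text{Coin}(f_1,\ldots,f_k)$ and that the two Nielsen equivalence relations on this common set coincide, so the partitions into Nielsen coincidence classes are literally the same. Hence it suffices to prove that a given Nielsen class $\mathcal F$ is (geometric) essential for $(f_1,\ldots,f_k)$ if and only if it is essential for $(F,G)$; summing over classes then gives $N(f_1,\ldots,f_k)=N(F,G)$. I will argue with the ``disappearing'' (i.e. inessential) formulation recalled in the Remark.

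The first step is a dictionary. Since $Y^{k-1}$ is a product, a homotopy $\{F^t\}$ of $F=(f_1,\ldots,f_1)$ is the same as a $(k-1)$-tuple of homotopies $\{h_1^t\},\ldots,\{h_{k-1}^t\}$ of $f_1$, a homotopy $\{G^t\}$ of $G$ is a tuple $\{g_2^t\},\ldots,\{g_k^t\}$ with $g_i^0=f_i$, and a path in $Y^{k-1}$ together with a rel-endpoints homotopy of paths are given coordinatewise. Thus ``$\{F^t\},\{G^t\}$-relatedness'' of $\mathcal F$ to a class of $(F^1,G^1)$ unwinds to: there are $x\in\mathcal F$ and $x'$ with $h_j^1(x')=g_{j+1}^1(x')$ for all $j$, and a path $\gamma$ from $x$ to $x'$ such that $t\mapsto h_j^t(\gamma(t))$ is homotopic rel endpoints to $t\mapsto g_{j+1}^t(\gamma(t))$ for each $j=1,\ldots,k-1$. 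I also record two elementary facts: (i) replacing a homotopy of a map into $Y^{k-1}$ by one homotopic to it rel endpoints in the relevant mapping space changes every evaluation path $t\mapsto F^t(\gamma(t))$ only up to homotopy rel endpoints, hence does not affect relatedness; (ii) the diagonal homotopies of $F$, those with $h_1^t=\cdots=h_{k-1}^t=f_1^t$, correspond exactly to homotopies $\{f_1^t\}$ of $f_1$, and under this correspondence $\text{Coin}(F^1,G^1)=\text{Coin}(f_1^1,\ldots,f_k^1)$ and the relatedness condition above becomes verbatim the one for $(f_1,\ldots,f_k)$.

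With the dictionary in hand, the implication ``$\mathcal F$ inessential for $(f_1,\ldots,f_k)$ $\Rightarrow$ $\mathcal F$ inessential for $(F,G)$'' is immediate: if $\{f_1^t\},\ldots,\{f_k^t\}$ make $\mathcal F$ disappear for $(f_1,\ldots,f_k)$, use the diagonal homotopy $F^t=(f_1^t,\ldots,f_1^t)$ and $G^t=(f_2^t,\ldots,f_k^t)$; by (ii), $\mathcal F$ is $\{F^t\},\{G^t\}$-related to some class of $(F^1,G^1)$ if and only if it is $\{f_i^t\}$-related to some class of $(f_1^1,\ldots,f_k^1)$, and the latter fails by hypothesis. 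So $\mathcal F$ is inessential for $(F,G)$.

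The converse is the substantive direction and, I expect, the main obstacle: from homotopies $\{F^t\}=(h_1^t,\ldots,h_{k-1}^t)$ and $\{G^t\}=(g_2^t,\ldots,g_k^t)$ that make $\mathcal F$ disappear for $(F,G)$, one must build homotopies $\{f_1^t\},\ldots,\{f_k^t\}$ making it disappear for $(f_1,\ldots,f_k)$. The difficulty is that the copies of $f_1$ in $F$ may move independently, so $\text{Coin}(F^1,G^1)=\{x:h_j^1(x)=g_{j+1}^1(x),\ 1\le j\le k-1\}$ is in general not the coincidence set of any $k$-tuple of maps $X\to Y$, and the naive choice $f_1^t:=h_1^t$, $f_i^t:=g_i^t$ need not work. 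My approach would be to first reduce to a diagonal homotopy of $F$: since each $h_j^1$ is homotopic to $h_1^1$, append to the $j$-th coordinate of $\{F^t\}$ (for $j\ge2$) the correction that retraces $h_j$ from $h_j^1$ back to $f_1$ and then follows $h_1$ from $f_1$ to $h_1^1$, while appending the constant homotopy to $\{G^t\}$; the $j$-th coordinate of the enlarged $F$-homotopy is then homotopic rel endpoints to $h_1$, so by (i) the enlarged pair induces the same relatedness as the diagonal homotopy $((h_1^t,\ldots,h_1^t),(g_2^t,\ldots,g_k^t))$, which by (ii) comes from an honest $k$-tuple homotopy. The delicate point that remains is to check that this enlargement does not revive $\mathcal F$: one must show that no point of $\mathcal F$ becomes related, through the enlarged pair, to a coincidence of its endpoint, even though that endpoint's coincidence set $\{x:h_1^1(x)=g_2^1(x)=\cdots=g_k^1(x)\}$ differs from the set $\text{Coin}(F^1,G^1)$ for which disappearance was assumed. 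Controlling how the correction paths interact with the Nielsen relation is, I expect, the heart of the proof; once this is settled, essentiality matches class by class and $N(f_1,\ldots,f_k)=N(F,G)$ follows.
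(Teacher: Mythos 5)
Your reduction to the class-by-class statement and your easy direction are fine and agree with the paper: since the partitions into Nielsen classes coincide, one only has to match essentiality, and the implication ``essential for $(F,G)$ $\Rightarrow$ essential for $(f_1,\ldots,f_k)$'' (equivalently, your contrapositive) follows by feeding the diagonal homotopy $F^t=(f_1^t,\ldots,f_1^t)$, $G^t=(f_2^t,\ldots,f_k^t)$ into the $(F,G)$-essentiality hypothesis and reading off coordinates. The problem is the converse, which you correctly identify as the substantive direction and then do not prove. Your plan --- append correction homotopies to the coordinates of an arbitrary $\{F^t\}$ so as to replace it by a diagonal one --- stalls exactly where you say it does: the enlarged homotopy ends at $(h_1^1,\ldots,h_1^1)$ rather than at $(h_1^1,\ldots,h_{k-1}^1)$, its coincidence set with $G^1$ is a different set from $\operatorname{Coin}(F^1,G^1)$, and nothing in your argument rules out that $\mathcal F$ becomes related to a class of this new coincidence set even though it was unrelated to every class of the old one. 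Writing ``once this is settled'' does not settle it; that step is the entire content of the hard direction.

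The paper closes this gap not by correcting homotopies but by quoting Brooks' theorem (Corollary 2 of \cite{Brooks}): because $Y$, hence $Y^{k-1}$, is a manifold, a coincidence class of $(F,G)$ is essential if and only if it is essential with respect to homotopies of the special form $(F,G^t)$, i.e.\ with $F$ held constant. For such homotopies the diagonality issue evaporates ($F$ does not move at all), an arbitrary homotopy $G^t$ of $G$ is exactly a tuple $(f_2^t,\ldots,f_k^t)$, and the required path is supplied by essentiality of $\mathcal F$ for $(f_1,\ldots,f_k)$ applied to the homotopies $\{f_1\},\{f_2^t\},\ldots,\{f_k^t\}$. It is telling that your argument nowhere uses the standing hypothesis that $Y$ is a manifold, while the paper's proof invokes it precisely at this point; to complete your route you would need either to import Brooks' result (or an equivalent ``one map suffices to be deformed'' statement) or to prove directly that the correction procedure cannot revive an inessential class, which is not a formal manipulation.
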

\begin{proof} Let $\mathcal{F}$ be a Nielsen coincidence class of $(f_1, \ldots , f_k)$ and, therefore, a Nielsen coincidence class of $(F, G)$. We will show that $\mathcal{F}$ is essential as a coincidence class of $(f_1, \ldots , f_k)$ if and only if it is essential as a coincidence class of $(F,G)$.

Suppose $\mathcal{F}$ is essential as a coincidence class of $(F,G)$. Let $\{f_1^t\}, \{f_2^t\}, \ldots ,$ $\{f_k^t\}$ be homotopies of $f_1=f_1^0$, $f_2=f_2^0, \ldots , f_k=f_k^0$, respectively. Then, $\{(f_1^t, \ldots , f_1^t)\}$, $\{(f_2^t, \ldots , f_k^t)\}$ are homotopies of $F$ and $G$, respectively. Since $\mathcal{F}$ is essential as a coincidence class of $(F,G)$,
there exists $$x' \in \text{Coin}((f_1^1, \ldots , f_1^1), (f_2^1, \ldots , f_k^1)) = \text{Coin}(f_1^1, \ldots , f_k^1)$$ and a path $\gamma: [0,1] \to X$ with $\gamma(0)=x \in \mathcal{F}$, $\gamma(1)=x'$ such that $t \mapsto (f_1^t(\gamma(t)), \ldots , f_1^t(\gamma(t)))$ and $t \mapsto (f_2^t(\gamma(t)), \ldots , f_k^t(\gamma(t)))$ are homotopic paths relative to the endpoints. Therefore, $t \mapsto f_1^t(\gamma(t))$ and $t \mapsto f_i^t(\gamma(t))$ are homotopic paths relative to the endpoints, $i=2, \ldots , k$. Hence, $\mathcal{F}$ is essential as a coincidence class of $(f_1, \ldots , f_k)$.

Now, suppose $\mathcal{F}$ is essential as a coincidence class of $(f_1, \ldots , f_k)$. Since $Y$ is a manifold, it follows from \cite[Corollary 2]{Brooks} that $\mathcal{F}$ is essential as a coincidence class of $(F,G)$ iff it is essential with respect to homotopies of the form $(F, G^t)$, in other words, the homotopy of $F$ can be kept constant. Since $\mathcal{F}$ is essential as a coincidence class of $(f_1, \ldots , f_k)$, under the homotopies $\{f_1\}, \{f_2^t\}, \ldots , \{f_k^t\}$, where $\bar G^t=(f_2^t, \ldots , f_k^t)$, there exists a path $\gamma:[0,1]\to X$ with $\gamma(0)=x$ and $\gamma(1)\in \text{Coin}(f_1,f_2^1,...,f_k^1)=\text{Coin}(F,\bar G^1)$ such that the path $\{f_1(\gamma)(t)\}$ and the path $\{f_i^t(\gamma)(t)\}$ are homotopic relative to the endpoints for $i=2,...,k$. Thus, by \cite[Corollary 2]{Brooks}, we conclude that $\mathcal{F}$ is essential as a coincidence class of $(F,G)$.  
\end{proof}

\begin{remark} Since $X$ is compact and the Nielsen coincidence classes are both open and closed in $\text{Coin}(F,G)$, this index-free geometric Nielsen number $N(F,G)$ (and hence $N(f_1,...,f_k)$) is finite, is a lower bound for the number of connected components in $\text{Coin}(F,G)$ (and hence $\text{Coin}(f_1,...,f_k)$), and is a homotopy invariant.
\end{remark}

\section{Reidemeister coincidence classes for multiple maps}

In \cite{Christo}, P. C. Staecker also developed a Reidemeister-type theory for coincidences of multiple maps. Let $X$ and $Y$ be connected, locally path-connected, and semilocally simply connected spaces, and $\widetilde{X}, \widetilde{Y}$ their universal covering spaces with projection maps $p_X: \widetilde{X} \to X$ and $p_Y: Y \to \widetilde{Y}$, respectivelly. Let $f_1, \ldots , f_k: X \to Y$ be maps, $k>2$, and denote $$\Coin (f_1, \ldots , f_k) = \{ x\in X \ | \ f_1(x) = \cdots = f_k(x)\} $$ the set of coincidences of the maps $f_1, \ldots , f_k$.

\begin{theorem}[\cite{Christo}, Theorem 2.1] Let $f_1, \ldots , f_k : X \to Y$ be maps with lifts $\tilde{f}_i: \widetilde{X} \to \widetilde{Y}$ and induced homomorphisms $\phi_i : \pi_1(X) \to \pi_1(Y)$. Then:
\begin{enumerate}
\item $$\Coin(f_1, \ldots , f_k) = \bigcup_{\alpha_2, \ldots , \alpha_k \in \pi_1(Y)} p_X ( \Coin (\tilde{f}_1, \alpha_2 \tilde{f}_2, \ldots , \alpha_k \tilde{f}_k)) .$$
\item For $\alpha_i, \beta_i \in \pi_1(Y)$, the sets $$p_X ( \Coin (\tilde{f}_1, \alpha_2 \tilde{f}_2, \ldots , \alpha_k \tilde{f}_k)) \ \ \text{and} \ \ p_X ( \Coin (\tilde{f}_1, \beta_2 \tilde{f}_2, \ldots , \beta_k \tilde{f}_k))  $$ are disjoint or equal. 
\item The above sets are equal if and only if there is some $z\in \pi_1(X)$ with $\beta_i = \phi_1(z) \alpha_i \phi_i(z)^{-1}$ for all $i$.

\end{enumerate}

\end{theorem}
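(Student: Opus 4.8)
The plan is to adapt, essentially verbatim, the classical bookkeeping of Reidemeister coincidence classes for a pair of maps. Throughout I will use three standard facts about universal covers: (i) the deck group of $\widetilde Y$, identified with $\pi_1(Y)$, acts transitively on every fibre $p_Y^{-1}(y)$; (ii) the deck action on a universal cover is free; and (iii) the chosen lifts satisfy the twisting relation $\tilde f_i\circ z=\phi_i(z)\circ\tilde f_i$ for every $z\in\pi_1(X)$ (this being the working description of the induced homomorphism $\phi_i$), together with $p_X\circ z=p_X$ and $p_Y\circ\alpha=p_Y$ for deck transformations $z$ and $\alpha$.

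For part (1), the inclusion $\supseteq$ is immediate: if $\tilde f_1(\tilde x)=\alpha_i\tilde f_i(\tilde x)$ for $i=2,\dots,k$, apply $p_Y$ and use $p_Y\circ\alpha_i=p_Y$ to get $f_1(p_X(\tilde x))=\cdots=f_k(p_X(\tilde x))$, so $p_X(\tilde x)\in\Coin(f_1,\dots,f_k)$. For $\subseteq$, take $x\in\Coin(f_1,\dots,f_k)$ and any $\tilde x\in p_X^{-1}(x)$; the points $\tilde f_1(\tilde x),\dots,\tilde f_k(\tilde x)$ all lie in the single fibre $p_Y^{-1}(f_1(x))$, so by (i) there exist $\alpha_2,\dots,\alpha_k\in\pi_1(Y)$ with $\alpha_i\tilde f_i(\tilde x)=\tilde f_1(\tilde x)$, hence $\tilde x\in\Coin(\tilde f_1,\alpha_2\tilde f_2,\dots,\alpha_k\tilde f_k)$ and $x$ lies in its $p_X$-image.

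For parts (2) and (3), I would first prove the ``if'' direction of (3). Suppose $z\in\pi_1(X)$ satisfies $\beta_i=\phi_1(z)\alpha_i\phi_i(z)^{-1}$ for all $i$. If $\tilde f_1(\tilde w)=\alpha_i\tilde f_i(\tilde w)$, then using (iii),
\[
\tilde f_1(z\tilde w)=\phi_1(z)\tilde f_1(\tilde w)=\phi_1(z)\alpha_i\tilde f_i(\tilde w)=\beta_i\phi_i(z)\tilde f_i(\tilde w)=\beta_i\tilde f_i(z\tilde w),
\]
so $z$ maps $\Coin(\tilde f_1,\alpha_2\tilde f_2,\dots,\alpha_k\tilde f_k)$ into $\Coin(\tilde f_1,\beta_2\tilde f_2,\dots,\beta_k\tilde f_k)$; the same argument with $z^{-1}$ (note $\alpha_i=\phi_1(z^{-1})\beta_i\phi_i(z^{-1})^{-1}$) gives the reverse inclusion, so $z$ is a bijection between the two sets, and since $p_X\circ z=p_X$ their $p_X$-images coincide. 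Conversely, suppose the $p_X$-images of $\Coin(\tilde f_1,\alpha_2\tilde f_2,\dots)$ and $\Coin(\tilde f_1,\beta_2\tilde f_2,\dots)$ are not disjoint: choose $\tilde x$ in the first and $\tilde x'$ in the second with $p_X(\tilde x)=p_X(\tilde x')$, and let $z\in\pi_1(X)$ be the unique deck transformation with $\tilde x'=z\tilde x$. From $\tilde f_1(z\tilde x)=\beta_i\tilde f_i(z\tilde x)$, $\tilde f_1(\tilde x)=\alpha_i\tilde f_i(\tilde x)$ and (iii) one obtains $\phi_1(z)\alpha_i\tilde f_i(\tilde x)=\beta_i\phi_i(z)\tilde f_i(\tilde x)$; by freeness (ii) this forces $\phi_1(z)\alpha_i=\beta_i\phi_i(z)$, i.e. $\beta_i=\phi_1(z)\alpha_i\phi_i(z)^{-1}$ for all $i$, and then the ``if'' part shows the two sets are equal. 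This simultaneously establishes (2) --- any two of the sets in (1) are disjoint or equal --- and the ``only if'' direction of (3).

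Finally I would record the routine verification that the relation ``$(\alpha_2,\dots,\alpha_k)\sim(\beta_2,\dots,\beta_k)$ whenever some $z$ satisfies $\beta_i=\phi_1(z)\alpha_i\phi_i(z)^{-1}$ for all $i$'' is an equivalence relation on $\pi_1(Y)^{k-1}$ (reflexivity from $z=1$, symmetry from $z\mapsto z^{-1}$, transitivity from multiplying the group elements), so that the union in (1) is indexed by its equivalence classes. The only step with any subtlety is the passage, in the ``only if'' direction, from the overlap of two projected coincidence sets to the group element $z$ realising the relation: the key point is freeness of the deck action on $\widetilde Y$, which upgrades an equality of two deck transformations evaluated at the single point $\tilde f_i(\tilde x)$ to an equality of deck transformations. (As in the two-map case, when both coincidence sets are empty the ``equal iff such $z$ exists'' statement is to be understood at the level of these bookkeeping classes.)
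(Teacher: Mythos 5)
Your argument is correct and is exactly the standard covering-space bookkeeping that the paper relies on: the theorem is quoted from Staecker (\cite{Christo}, Theorem 2.1) without proof, and your lifting/deck-transformation computation is the same argument used there, being the verbatim generalization of the two-map case. Your closing caveat about the empty-coincidence-set degeneracy in part (3) is apt and is the only point where the literal statement needs the usual charitable reading.
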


\

The above theorem gives rise to the definition of the set of Reidemeister classes and the Reidemeister number for $\phi_1, \ldots , \phi_k$ : given $(\alpha_2, \ldots , \alpha_k)$ and $(\beta_2, \ldots , \beta_k)$ in $\pi_1(Y)^{k-1}$, 
$(\alpha_2, \ldots , \alpha_k) \sim (\beta_2, \ldots , \beta_k)$ if and only if $\beta_i = \phi_1(z) \alpha_i \phi_i(z)^{-1}$, $i=2, \ldots , k$, for some $z \in \pi_1(X)$. The quotient of $\pi_1(Y)^{k-1}$ by such equivalence relation is denoted by $\mathcal{R}(\phi_1, \ldots , \phi_k) = \pi_1(Y)^{k-1}/\sim$ and each class is called a Reidemeister class for $\phi_1, \ldots , \phi_k$. The cardinality of $\mathcal{R}(\phi_1, \ldots , \phi_k)$ is the Reidemeister number for $\phi_1, \ldots , \phi_k$ and it is denoted by $R(\phi_1, \ldots , \phi_k)$.

\begin{remark}\label{remark3} Note that $\mathcal{R}(\phi_1, \ldots , \phi_k)= \mathcal{R}((\phi_1, \ldots , \phi_1), (\phi_2, \ldots , \phi_k))$ the usual Reidemeister set for the two homomorphisms $(\phi_1, \ldots , \phi_1), (\phi_2, \ldots , \phi_k): \pi_1(X) \to \pi_1 (Y)^{k-1}$ induced by the maps $(f_1, \ldots , f_1), (f_2, \ldots , f_k): X \to Y^{k-1}$, respectivelly.

\end{remark}

\begin{proposition} \label{invariance}
For any permutation $\sigma \in S_k$ on $\{1,...,k\}$, 
$$ R(f_1,...,f_k)=R(f_{\sigma(1)}, ..., f_{\sigma(k)}). $$
\end{proposition}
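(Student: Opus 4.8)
The plan is to show that the permutation action on the index set $\{1,\dots,k\}$ induces a bijection on Reidemeister sets $\mathcal{R}(\phi_1,\dots,\phi_k) \to \mathcal{R}(\phi_{\sigma(1)},\dots,\phi_{\sigma(k)})$. Since $S_k$ is generated by transpositions, it suffices to handle two cases: (i) a transposition $(i\,j)$ with $1 < i < j \le k$ that fixes the first index, and (ii) a transposition $(1\,j)$ with $2 \le j \le k$ that moves the distinguished first coordinate. Case (i) is essentially formal: the equivalence relation on $\pi_1(Y)^{k-1}$ defining $\mathcal{R}(\phi_1,\dots,\phi_k)$ treats the $k-1$ coordinates $(\alpha_2,\dots,\alpha_k)$ symmetrically once $\phi_1$ is fixed as the ``reference'' homomorphism, so permuting coordinates $2,\dots,k$ and simultaneously permuting the homomorphisms $\phi_2,\dots,\phi_k$ carries the relation to the relation for the permuted tuple; the induced map on quotients is a bijection with inverse given by the inverse permutation.

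The substantive step is case (ii). Here I would write down the map explicitly. Given $(\alpha_2,\dots,\alpha_k) \in \pi_1(Y)^{k-1}$ representing a Reidemeister class for $(\phi_1,\dots,\phi_k)$, send it to the tuple $(\beta_2,\dots,\beta_k)$ for $(\phi_j,\phi_2,\dots,\phi_1,\dots,\phi_k)$ (with $\phi_1$ now in slot $j$ and $\phi_j$ first) defined by $\beta_j = \alpha_j^{-1}$ and $\beta_i = \alpha_j^{-1}\alpha_i$ for $i \ne j$, $i \ge 2$. The idea behind this formula: a coincidence of $(\tilde f_1, \alpha_2 \tilde f_2, \dots, \alpha_k \tilde f_k)$ is a point where $\tilde f_1 = \alpha_i \tilde f_i$ for all $i$; to make $\tilde f_j$ the reference lift one multiplies everything by $\alpha_j^{-1}$, giving $\alpha_j^{-1}\tilde f_1 = \tilde f_j$ and $\alpha_j^{-1}\alpha_i \tilde f_i = \tilde f_j$. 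Then I would verify that this assignment respects the twisted conjugacy relations: if $(\alpha_2,\dots,\alpha_k) \sim (\alpha'_2,\dots,\alpha'_k)$ via $z \in \pi_1(X)$, i.e. $\alpha'_i = \phi_1(z)\alpha_i\phi_i(z)^{-1}$, a direct computation shows the image tuples are related via the same $z$ but with respect to the new first homomorphism $\phi_j$ — one checks $\beta'_j = \phi_j(z)\beta_j\phi_1(z)^{-1}$ and $\beta'_i = \phi_j(z)\beta_i\phi_i(z)^{-1}$. Finally, exhibiting the analogous inverse map (using $\phi_j$ as reference and sending back) shows it is a bijection. Alternatively, and perhaps more cleanly, I could invoke Remark~\ref{remark3}: $\mathcal{R}(\phi_1,\dots,\phi_k) = \mathcal{R}((\phi_1,\dots,\phi_1),(\phi_2,\dots,\phi_k))$, and then appeal to the known symmetry $\mathcal{R}(\psi,\psi') \cong \mathcal{R}(\psi',\psi)$ of the classical two-homomorphism Reidemeister set together with the coordinate-permutation symmetry of $Y^{k-1}$, reducing case (ii) to a single swap of the two block homomorphisms.

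With the bijection of Reidemeister sets established, the cardinality equality $R(f_1,\dots,f_k) = R(f_{\sigma(1)},\dots,f_{\sigma(k)})$ is immediate, since $R$ is by definition the cardinality of $\mathcal{R}$.

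The main obstacle I anticipate is purely bookkeeping: getting the index gymnastics in case (ii) exactly right, in particular tracking which homomorphism sits in which slot after the transposition and confirming that the twisted conjugacy identity $\beta'_i = \phi_{j}(z)\,\beta_i\,\phi_i(z)^{-1}$ holds with the correct homomorphism ($\phi_j$, not $\phi_1$) in the leftmost position. There is no conceptual difficulty — the whole statement is a reflection of the fact that the defining relation on $\pi_1(Y)^{k-1}$ does not actually privilege the index $1$, it only uses it as a normalization — but the notation makes it easy to slip. I would therefore present case (ii) via the Remark~\ref{remark3} reduction to minimize the chance of a sign/index error, and relegate the explicit formula to a one-line verification.
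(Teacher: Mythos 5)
Your proof is correct and follows essentially the same route as the paper: reduction to transpositions, the coordinate-permutation bijection for transpositions fixing the index $1$, and the explicit map $\beta_j=\alpha_j^{-1}$, $\beta_i=\alpha_j^{-1}\alpha_i$ for the transposition $(1\,j)$, which is exactly the paper's formula in its Case 2 (you even supply the well-definedness computation the paper leaves as ``not difficult to see''). The only caveat is your suggested alternative via Remark~\ref{remark3}: the transposition $(1\,j)$ does not correspond to swapping the two block homomorphisms $(\phi_1,\dots,\phi_1)$ and $(\phi_2,\dots,\phi_k)$, so that shortcut would need more justification than a single application of the symmetry $\mathcal{R}(\psi,\psi')\cong\mathcal{R}(\psi',\psi)$ --- but your primary explicit argument does not rely on it.
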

\begin{proof} Since a permutation is a product of transpositions, it is sufficient to prove the result for the transpositions.

\noindent \underline{Case 1}: Suppose $\sigma = (i j)$ with $\{ i, j\} \subset \{2,3, \ldots k\}$. Then, it is not difficult to see that $$\mathcal{R}(f_1, \ldots, f_k ) \to \mathcal{R}(f_{\sigma(1)}, ..., f_{\sigma(k)}) $$ 
$$[(\alpha_2, \ldots , \alpha_k)] \to [(\alpha_{\sigma(2)}, \ldots , \alpha_{\sigma(k)})] $$ is a well-defined bijection between the Reidemeister classes sets $\mathcal{R}(f_1, \ldots, f_k )$ and $\mathcal{R}(f_{\sigma(1)}, ..., f_{\sigma(k)}) $. Therefore, $R(f_1,...,f_k)=R(f_{\sigma(1)}, ..., f_{\sigma(k)})$.

\noindent \underline{Case 2}: Suppose $\sigma = (1 i)$,  for some $i\in \{2, \ldots , k\}$. Then, $$\mathcal{R}(f_1, \ldots, f_k ) \to \mathcal{R}(f_{\sigma(1)}, ..., f_{\sigma(k)}) $$ 
$$[(\alpha_2, \ldots , \alpha_k)] \to [(\alpha_i^{-1} \alpha_2, \ldots ,  \alpha_i^{-1} \alpha_{i-1}, \alpha_i^{-1},     \alpha_i^{-1} \alpha_{i+1}, \ldots ,  \alpha_i^{-1} \alpha_k)] $$ is a well-defined bijection between the Reidemeister classes sets $\mathcal{R}(f_1, \ldots, f_k )$ and $\mathcal{R}(f_{\sigma(1)}, ..., f_{\sigma(k)}) $. Therefore, $R(f_1,...,f_k)=R(f_{\sigma(1)}, ..., f_{\sigma(k)})$.\end{proof}

\section{Jiang type results for coincidences of multiple maps}

In Nielsen coincidence theory, for a large class of spaces known as Jiang-type spaces, either $N(f,g)=0$ or $N(f,g)=R(f,g)$. In this section, we also obtain similar results for multiple maps.

\begin{theorem} \label{Nil-Jiang}
Let $X$ and $N$ be compact nilmanifolds with $\dim X \geq (k-1) \dim N$, $k \geq 2$. For any maps $f_1,..., f_k$, either
$N(f_1,...,f_k)= 0$ or $N(f_1,...,f_k)= R(f_1,...,f_k)$.
\end{theorem}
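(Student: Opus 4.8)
The plan is to reduce the multiple-map statement to the classical two-map Jiang-type theorem for nilmanifolds via the factorization already established in Proposition~\ref{formula1}. Set $F=(f_1,\dots,f_1)\colon X\to N^{k-1}$ and $G=(f_2,\dots,f_k)\colon X\to N^{k-1}$, so that $\mathrm{Coin}(F,G)=\mathrm{Coin}(f_1,\dots,f_k)$, the Nielsen coincidence classes agree, $N(f_1,\dots,f_k)=N(F,G)$ by Proposition~\ref{formula1}, and $R(f_1,\dots,f_k)=R(F,G)$ by Remark~\ref{remark3}. The key observation is that $N^{k-1}$ is again a compact nilmanifold (a finite product of nilmanifolds is a nilmanifold), and $\dim X\ge (k-1)\dim N=\dim N^{k-1}$, so $(F,G)$ is a pair of maps between compact nilmanifolds of the right dimensions to invoke the classical result. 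One then cites the known Jiang-type theorem for coincidences on nilmanifolds (e.g.\ the work of Gon\c{c}alves--Wong on Reidemeister and Nielsen coincidence numbers for maps between nilmanifolds): for maps $F,G$ between compact nilmanifolds with $\dim X\ge\dim N^{k-1}$, either $N(F,G)=0$ or $N(F,G)=R(F,G)$. Translating back through the two identifications above yields the theorem.

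First I would record the factorization and the two identities $N(f_1,\dots,f_k)=N(F,G)$ and $R(f_1,\dots,f_k)=R(F,G)$, citing Proposition~\ref{formula1} and Remark~\ref{remark3}. Second I would check that $N^{k-1}$ is a nilmanifold: if $N=L/\Gamma$ with $L$ a simply connected nilpotent Lie group and $\Gamma$ a lattice, then $N^{k-1}=L^{k-1}/\Gamma^{k-1}$ with $L^{k-1}$ simply connected nilpotent and $\Gamma^{k-1}$ a lattice, so the hypotheses of the classical theorem are met. Third, I would apply the classical nilmanifold Jiang-type dichotomy to the pair $(F,G)$ and transport the conclusion back to $(f_1,\dots,f_k)$.

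There is a subtlety to address: the classical coincidence Nielsen number for nilmanifolds is the \emph{index-based} (or semi-index) invariant, whereas the $N(f_1,\dots,f_k)$ of this paper is the index-free geometric one. So the genuine content of the proof is to argue that for the pair $(F,G)$ between nilmanifolds these two notions coincide, or at least that the geometric essential classes are exactly the ones detected by the classical invariant. The cleanest route is to note that on nilmanifolds, by the work establishing Wecken-type and Jiang-type properties in the coincidence setting, every essential class (in either sense) can be realized and the nonvanishing is governed entirely by the $\pi_1$-level linear-algebraic condition; in particular the geometric essentiality of a class is equivalent to a condition on $\mathrm{coker}$ of $\phi_{1\ast}-\phi_{2\ast}$-type data, which is the same condition controlling the index. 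An alternative, more self-contained route is to redo the averaging/vanishing argument directly: if some class is geometrically inessential, use the nilmanifold structure (in particular the fact that $\pi_1(N^{k-1})$ is nilpotent and torsion-free) to push all classes off, giving $N=0$; otherwise all $R(F,G)$ classes are geometrically essential, giving $N=R$.

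The main obstacle I anticipate is precisely the identification of the index-free geometric invariant with the classical one on nilmanifolds—i.e.\ verifying that ``geometrically inessential'' forces global coincidence-removability rather than just local removability of a single class. Everything else (the product of nilmanifolds being a nilmanifold, the dimension bookkeeping, and the translation of $N$ and $R$ through the $(F,G)$ factorization) is routine. If a fully self-contained argument is wanted, the homotopy-disappearance characterization in the Remark following Proposition~\ref{formula1}, combined with the nilpotency of $\pi_1(N)$, should let one run the standard ``all or nothing'' dichotomy directly at the fundamental-group level without invoking indices at all.
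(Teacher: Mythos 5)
Your proposal follows essentially the same route as the paper: reduce to the pair $F=(f_1,\dots,f_1)$, $G=(f_2,\dots,f_k):X\to N^{k-1}$ via Proposition \ref{formula1} and Remark \ref{remark3}, observe that $N^{k-1}$ is again a compact nilmanifold of dimension $(k-1)\dim N\le\dim X$, and invoke the Gon\c{c}alves--Wong two-map dichotomy for nilmanifolds (the paper cites \cite[Theorem 4.2]{daci-peter2}). Your extra discussion of reconciling the index-free geometric Nielsen number with the classical invariant is a reasonable caution, but the paper itself treats this as covered by the citation and adds nothing further.
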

\begin{proof}  By Proposition \ref{formula1}, $N(f_1,...,f_k)=N(F,G)$, where $F, G : X \to N^{k-1}$, $F=(f_1, \ldots , f_1)$ and $G=(f_2, \ldots , f_k)$. Moreover, $R(f_1,...,f_k)=R(F,G)$ (see Remark \ref{remark3}).
From \cite[Theorem 4.2]{daci-peter2}, either $N(F,G)=0$ or $N(F,G)=R(F,G)$. Therefore, either
$N(f_1,...,f_k)= 0$ or $N(f_1,...,f_k)= R(f_1,...,f_k)$.
\end{proof}                    
                    
When one of the maps is the constant map, we obtain similar results as in the classical root case.                    
                    
\begin{theorem} \label{root} Let $X$ be a topological space, $Y$ a topological manifold, $c\in N$ and denote by $\bar c$ the constant map at $c$. For any maps $f_2,..., f_k: X \to N$, $k\ge 2$, either
$N(\bar c,f_2, ...,f_k)= 0$ or $N(\bar c, f_2,...,f_k)= R(\bar c, f_2,...,f_k)$.
\end{theorem}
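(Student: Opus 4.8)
The plan is to reduce the statement to the classical Wecken-type dichotomy for \emph{roots} of a single map into a manifold, via the reformulation of Proposition~\ref{formula1}. Write $c'=(c,\dots,c)\in N^{k-1}$, and set $F=(\bar c,\dots,\bar c)\colon X\to N^{k-1}$ and $G=(f_2,\dots,f_k)\colon X\to N^{k-1}$, so that $F$ is the constant map $\overline{c'}$. By Proposition~\ref{formula1}, $N(\bar c,f_2,\dots,f_k)=N(F,G)$, and by Remark~\ref{remark3}, $R(\bar c,f_2,\dots,f_k)=R(F,G)$. Since $F$ is constant, $\Coin(F,G)=G^{-1}(c')$ is precisely the root set of $G$ at $c'$, and two roots $x_0,x_1$ lie in the same Nielsen coincidence class of $(F,G)$ if and only if there is a path $\gamma$ from $x_0$ to $x_1$ with $G\circ\gamma$ (equivalently, each $f_i\circ\gamma$) null-homotopic relative to endpoints; this is exactly the relation defining Nielsen root classes of $G$ at $c'$. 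Likewise, since $F$ induces the trivial homomorphism, $\mathcal{R}(F,G)$ is the coset space $\pi_1(N^{k-1})/G_{\#}\pi_1(X)$, i.e.\ the set of Reidemeister root classes of $G$ at $c'$, and $R(F,G)$ is the Reidemeister root number.

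The next step is to match up the notions of essentiality. Since $N^{k-1}$ is a manifold, \cite[Corollary 2]{Brooks} (already used in the proof of Proposition~\ref{formula1}) shows that a coincidence class of $(F,G)$ is essential if and only if it is essential with respect to homotopies that keep $F$ constant; for such homotopies $\{(F,G^t)\}$ the coincidence set stays inside $(G^t)^{-1}(c')$, so essentiality of a class of $(F,G)$ agrees with essentiality of the corresponding Nielsen root class of $G$ at $c'$ in the usual sense. Hence $N(F,G)$ equals the geometric Nielsen root number $N(G;c')$. (Morally this is why roots behave better than general coincidences: one is free to exploit the manifold structure of the target $N^{k-1}$, not merely to homotope $G$.) It therefore remains to prove the root dichotomy $N(G;c')\in\{0,R(G;c')\}$.

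For the root dichotomy I would argue as follows. If $G^{-1}(c')=\emptyset$ then $N(G;c')=0$. Otherwise, Nielsen root classes of $G$ at $c'$ are represented by elements of $\mathcal{R}(F,G)=\pi_1(N^{k-1})/G_{\#}\pi_1(X)$, the essential ones number $N(G;c')$, and $|\mathcal{R}(F,G)|=R(G;c')$. The group $\pi_1(N^{k-1},c')$ acts transitively on $\mathcal{R}(F,G)$. Given $g\in\pi_1(N^{k-1},c')$ represented by a loop $\omega$, choose a point-pushing ambient isotopy $\{h_t\}$ of $N^{k-1}$ with $h_0=\mathrm{id}$ and $h_t(c')=\omega(t)$ for all $t$ (such an isotopy exists for any topological manifold: cover $\omega$ by finitely many Euclidean charts and compose local pushes); since $h_1(c')=c'$, the map $h_1\circ G$ has the same root set, and the same Nielsen root classes, as $G$. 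A standard computation then shows that the homotopy $\{h_t\circ G\}$ from $G$ to $h_1\circ G$, read through these identifications, induces on $\mathcal{R}(F,G)$ the translation by $g$; since the bijection on Nielsen classes induced by any homotopy carries essential classes to essential classes, translation by $g$ preserves the set of essential root classes. As $g$ is arbitrary and the action is transitive, either every element of $\mathcal{R}(F,G)$ is an essential root class, whence $N(G;c')=R(G;c')$, or none is, whence $N(G;c')=0$. Combined with the reductions above, this proves the theorem.

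The step I expect to be the main obstacle is making the last paragraph precise in the index-free geometric setting: one must verify carefully that a homotopy induces a well-defined, essentiality-preserving bijection of Nielsen root classes, and that $\{h_t\circ G\}$ realizes exactly the $\pi_1$-action — the bookkeeping here involves the conjugation $(h_1)_{\#}=g(\,\cdot\,)g^{-1}$ on $\pi_1(N^{k-1},c')$ and the passage between the ``$\{G^t\}$-related'' relation and genuine bijections of classes. A secondary point, since Theorem~\ref{root} allows $X$ to be an arbitrary connected topological space rather than a finite CW complex, is to confirm that Proposition~\ref{formula1}, the coset description of root classes, and the finiteness of $R(F,G)$ all persist in that generality; this rests on $N^{k-1}$ being a manifold and on $X$ being connected.
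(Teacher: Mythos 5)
Your reduction to the two-map case $(F,G)$ with $F$ the constant map at $c'=(c,\dots,c)$, via Proposition~\ref{formula1} and Remark~\ref{remark3}, is exactly what the paper does; the difference lies in what happens afterwards. The paper's entire proof at that point is a citation of \cite[Corollary 2]{Brooks2}, Brooks' theorem that for roots of a map into a manifold either no Nielsen root class is essential or all of them are, i.e.\ $N=0$ or $N=R$. You instead reprove that dichotomy: you identify $\mathcal R(F,G)$ with the coset space $\pi_1(N^{k-1})/G_{\#}\pi_1(X)$, realize the transitive left $\pi_1(N^{k-1})$-action by point-pushing homotopies $\{h_t\circ G\}$, and invoke homotopy-invariance of essentiality to conclude that the essential classes form an invariant, hence empty or full, subset. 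That is in substance Brooks' own argument and it is sound; the caveats you flag --- existence of the point-pushing isotopy on a topological manifold, the fact that a homotopy induces an essentiality-preserving correspondence of classes in the index-free geometric setting, and the mismatch between the weak hypotheses on $X$ in Theorem~\ref{root} and the standing assumptions of Section~2 --- are precisely the points requiring care, and the last of them is glossed over by the paper as well (it simply defers to Brooks' generality). So the two arguments follow the same route; yours opens up the black box that the paper cites.
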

\begin{proof} It follows from \cite[Corollary 2]{Brooks2},  since $$N(\bar c,f_2, ...,f_k)=N((\bar{c}, \ldots , \bar{c}), (f_2, \ldots , f_k))$$ and   $$R(\bar c,f_2, ...,f_k)=R((\bar{c}, \ldots , \bar{c}), (f_2, \ldots , f_k)).$$
\end{proof}

Analogous to \cite[Theorem 6.3]{MonWon}, we have the following

\begin{theorem}\label{jiang-type}
Let $f_1,\ldots,f_k:M\to N$ from a closed connected $(k-1)n$-manifold $M$ to a closed connected orientable $n$-manifold $N$. Suppose $N$ is a Jiang space; a nilmanifold, an orientable coset space $G/K$ of a compact connected Lie group $G$ by a closed subgroup $K$; or a $\mathcal C$-nilpotent space whose fundamental group has a finite index center where $\mathcal C$ is the class of finite groups. Then $L(f_1,\ldots,f_k)=0 \Rightarrow N(f_1,...,f_k)=0$ and $L(f_1,\ldots,f_k)\ne 0 \Rightarrow N(f_1,...,f_k)=R(f_1,...,f_k)$.
\end{theorem}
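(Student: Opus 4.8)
The plan is to reduce the whole statement to the two-map case and invoke \cite[Theorem 6.3]{MonWon}. Set $P=N^{k-1}$ and let $F=(f_1,\ldots,f_1)$ and $G=(f_2,\ldots,f_k)$, both regarded as maps $M\to P$. Since $N$ is a closed connected orientable $n$-manifold, $P$ is a closed connected orientable $(k-1)n$-manifold, so $F$ and $G$ are maps between closed connected orientable manifolds of the same dimension $(k-1)n$, and \cite[Theorem 6.3]{MonWon} (the $k=2$ dichotomy, with $n$ replaced by $(k-1)n$ and $N$ by $P$) applies to the pair $(F,G)$, provided $P$ belongs to one of the four listed classes. The three relevant invariants match up under this reduction: $N(f_1,\ldots,f_k)=N(F,G)$ by Proposition \ref{formula1}, $R(f_1,\ldots,f_k)=R(F,G)$ by Remark \ref{remark3}, and $L(f_1,\ldots,f_k)=L(F,G)$ in $H^{(k-1)n}(M;\Z)$ (up to sign), which follows from the construction of the Lefschetz coincidence class in \cite{Biasi}, as that class for $k$ maps is built from the diagonal embedding $N\hookrightarrow N^{k-1}$ applied to $(f_2,\ldots,f_k)$ against $(f_1,\ldots,f_1)$. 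Hence it suffices to show that if $N$ lies in one of the four classes, then so does $P=N^{k-1}$.

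Next I would verify this closure under finite products, class by class. If $N$ is a Jiang space, then $N^{k-1}$ is a Jiang space, because the Jiang condition passes to finite products (the evaluation subgroup of a product is the product of the evaluation subgroups). If $N$ is a nilmanifold, then $N^{k-1}$ is a nilmanifold. If $N=G/K$ is an orientable coset space of a compact connected Lie group $G$ by a closed subgroup $K$, then $N^{k-1}\cong G^{k-1}/K^{k-1}$ is the coset space of the compact connected Lie group $G^{k-1}$ by the closed subgroup $K^{k-1}$, and orientability is preserved under finite products. Finally, if $N$ is $\mathcal C$-nilpotent with $\pi_1(N)$ having a center of finite index, then $N^{k-1}$ is $\mathcal C$-nilpotent (a finite product of $\mathcal C$-nilpotent spaces is $\mathcal C$-nilpotent) and $\pi_1(N^{k-1})\cong\pi_1(N)^{k-1}$ has center $Z(\pi_1(N))^{k-1}$ of index $[\pi_1(N):Z(\pi_1(N))]^{k-1}<\infty$.

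With $P=N^{k-1}$ shown to be in the relevant class, \cite[Theorem 6.3]{MonWon} gives $L(F,G)=0\Rightarrow N(F,G)=0$ and $L(F,G)\ne 0\Rightarrow N(F,G)=R(F,G)$. Translating back through the three identifications above yields $L(f_1,\ldots,f_k)=0\Rightarrow N(f_1,\ldots,f_k)=0$ and $L(f_1,\ldots,f_k)\ne 0\Rightarrow N(f_1,\ldots,f_k)=R(f_1,\ldots,f_k)$, as claimed.

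The step I expect to be the main obstacle is the identification $L(f_1,\ldots,f_k)=L(F,G)$: the several-maps Lefschetz coincidence class of \cite{Biasi} and the two-maps Lefschetz coincidence class for $(F,G):M\to N^{k-1}$ must be shown to agree up to a sign that does not affect vanishing, which requires unwinding both definitions and checking compatibility with the diagonal $N\hookrightarrow N^{k-1}$. A secondary point requiring care is whether \cite[Theorem 6.3]{MonWon} places any hypothesis (such as orientability) on the domain beyond closed and connected; if so, one must confirm it is inherited by $M$ here or is not needed for the vanishing/equality dichotomy.
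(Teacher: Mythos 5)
The paper itself offers no proof of Theorem \ref{jiang-type}: it is asserted as ``analogous to'' \cite[Theorem 6.3]{MonWon}, so you are supplying an argument where the authors supply a citation. Your reduction to the pair $(F,G):M\to N^{k-1}$ is exactly the mechanism the paper uses for its nilmanifold case (Theorem \ref{Nil-Jiang}), via Proposition \ref{formula1} and Remark \ref{remark3}, and your product-closure checks for the four classes of targets are all correct (Gottlieb groups, nilmanifolds, coset spaces and centers of fundamental groups all behave multiplicatively under finite products). Two points need tightening. First, the identification $L(f_1,\ldots,f_k)=\pm L(F,G)$, which you rightly single out as the crux, should not be left as ``requires unwinding both definitions'': it is already available from the literature the paper cites, namely the product formula $L(f_1,\ldots,f_k)=L(f_1,f_2)\cup\cdots\cup L(f_1,f_k)$ of \cite{MonSpiez} and \cite[Theorem 5.4]{MonWon}, combined with the observation that the cohomology class dual to the diagonal of $N^{k-1}$ in $N^{k-1}\times N^{k-1}$ is, up to the shuffle of factors and a sign, the cross product of $k-1$ copies of the diagonal class of $N$, so that its pullback under $(F,G)$ is precisely $L(f_1,f_2)\cup\cdots\cup L(f_1,f_k)$.

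Second, your two-map input is miscited: \cite[Theorem 6.3]{MonWon} is itself a statement about $k$ maps (it is the theorem the present one is ``analogous to''), not the $k=2$ dichotomy you invoke. The dichotomy for the single pair $(F,G)$ must instead come from the classical Jiang-type coincidence results for the listed classes of spaces (e.g. \cite{daci-peter2}, \cite{daci-peter6} and the Jiang-space literature), and there you must indeed confirm the point you flag: the theorem assumes only $N$ orientable, so when $M$ is non-orientable the integral index is unavailable and one needs the semi-index or twisted-coefficient formulation as in \cite{MonSpiez}, together with the fact that the index-free geometric Nielsen number of Section 2 agrees with the invariant appearing in those two-map theorems. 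With those two repairs the argument is sound, and arguably more self-contained than the paper's bare appeal to analogy.
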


\begin{remark} Every compact nilmanifold is the total space of a principal $S^1$-bundle over another nilmanifold. Using the upper central series, there is a sequence of $S^1$ fibrations such that an $2n$-dimensional compact nilmanifold $M$ can fiber over an $n$-dimensional nilmanifold $N$ (see e.g. \cite{w2}). Consider such a fibration $p:M\to N$ and a point $c\in N$. If $M$ is not symplectic then for any map $f:M\to N$, it follows from \cite[Example 7.3]{MonWon} that the Lefschetz class $L_2(p,\bar c, f)=0$. It follows that $N(p,\bar c,f)=0$ so, by \cite{daci-peter}, $R(p,\bar c,f)=\infty$ while $R(p,\bar c)<\infty$ because $p$ and $\bar c$ cannot be made coincidence free.
The existence of non-symplectic nilmanifolds is equivalent to the existence of symplectic structure on nilpotent Lie algebras (see e.g., \cite{benson-gordon,salamon,delBarco}).
\end{remark}

It was shown in \cite{MonSpiez} that the Lefschetz class $L(f_1,...,f_k)$ is related to the classes $L(f_1,f_2)$, $L(f_1,f_3)$, $\ldots$, $L(f_1,f_k)$. Indeed, the obstruction $o(f_1,...,f_k)$ to deforming the $k$-maps to be coincidence free is the product $o(f_1,f_2) \cup \cdots \cup o(f_1,f_k)$ (\cite[Theorem 5.4]{MonWon}). Next, we investigate the relationship between $R(f_1,...,f_k)$ and the product $R(f_1,f_2)\cdot R(f_1,f_3) \cdots R(f_1,f_k)$ when these quantities are finite.

In general, there is a well-defined natural surjection
\begin{equation}
\Psi: \mathcal{R}(\phi_1, \ldots , \phi_k) \to \mathcal{R}(\phi_1, \phi_2) \times \cdots \times \mathcal{R}(\phi_1, \phi_k)
\end{equation}
given by 
\begin{equation}
\Psi \left([(\alpha_2, \ldots , \alpha_k)]\right) = ([\alpha_2], \ldots , [\alpha_k]).
\end{equation}

Therefore, if $ R(\phi_1, \ldots , \phi_k)< \infty$ then $R(\phi_1, \phi_j) < \infty$, $j=2, \ldots , k$ and
\begin{equation}\label{R-ineq}
R(\phi_1, \ldots , \phi_k) \ge R(\phi_1,\phi_2)  \cdots  R(\phi_1,\phi_k).
\end{equation}

\section{Coincidences of maps into a torus}

Let $ \varphi, \psi : G \to A$ be group homomorphisms, where $A$ is an abelian group. In this case, the relation that determines the Reidemeister classes with respect to the pair $(\varphi , \psi)$ can be written as follows: given $\alpha , \beta \in A$, $\alpha \sim \beta$ if and only if $\beta - \alpha = \varphi(g) - \psi (g)$ for some $g \in G$, that is, $\mathcal{R}(\varphi, \psi) = \text{coker}(\varphi - \psi)$. Therefore, $R(\varphi, \psi) = \# \text{coker}(\varphi - \psi)$.

Let $\varphi,  \psi: \Z^m \to \Z^n$ be homomorphisms, $\{\alpha_1, \ldots , \alpha_m\}$ generators of $\Z^m$ and $\{\beta_1, \ldots , \beta_n\}$ generators of $\Z^n$.  If $\varphi (\alpha_j) = \sum_{i=1}^n a_{ij} \beta_i $ and $\psi (\alpha_j) = \sum_{i=1}^n b_{ij} \beta_i$, the Reidemeister number $R(\varphi, \psi)$ is determined by the integral matrices $A=(a_{ij})$ and $B=(b_{ij})$: if all $n \times n$ minor of  $A-B$ is zero then $R(\varphi, \psi)= \infty$. If some $n\times n$ minor of $A-B$ is nonzero, then $R(\varphi, \psi)$ is finite and its effective computation can be done by using  Smith normal form. Indeed, let $C=A-B$,  by Smith normal form, there are matrices $S\in M_n(\Z)$ and $T \in M_m(\Z)$ with $|\det (S)| = |\det (T)| =1$ and $SCT$ is of the form $$\left(\begin{array}{ccccccc}
l_1 & 0 & 0 & \ldots & & & 0  \\
0 & l_2 & 0 & \ldots & & & 0 \\
0 & 0 & \ddots & \vdots & \vdots & & \vdots \\
\vdots & \cdots & & l_r & \ldots & & \vdots \\
& & & &0& \ldots & \\
\vdots & & & & & \ddots &  \\
0& & & \ldots & & & 0
\end{array} \right)$$

It follows that $R(\varphi, \psi)= \# \text{coker}(C) = \# \text{coker}(SCT)$ since $\text{coker}(C)$ and $\text{coker}(SCT) $ are isomorphic. If we assume $R(\varphi, \psi) < \infty$ then $r=n$ and $\# \text{coker}(SCT)=l_1 \cdot l_2 \cdot \cdots l_n$. The numbers $l_k$ are defined by the following: $l_k = \displaystyle \frac{d_k (C)}{d_{k-1}(C)}$, where $d_k(C)$ is the greatest common divisor of all $k \times k$ minors of $C$ and $d_0(C)=1$. For example, for the matrix $$C= \left(\begin{array}{ccc}
2 & 4 & 1 \\
2& 6 & 2
\end{array} \right) ,$$ $l_1(C)=1$ and $l_2(C)=2$. Therefore, $\# \text{coker}(C)=1 \cdot 2 = 2$.

In particular, when $m=n$, $R(\varphi, \psi) = \infty$ if $\det (A-B)=0$ and $R(\varphi, \psi) = |\det (A-B)|$ if $\det (A-B) \neq 0$.


\

\begin{proposition}\label{proposition3} Let $f_1, f_2, f_3 : T^2 \to S^1$ be maps and $\phi_1, \phi_2, \phi_3: \pi_1(T^2) \to \pi_1(S^1)$ the induced homomorphisms. If $R(\phi_1, \phi_2, \phi_3)< \infty$ then $R(\phi_1, \phi_2)< \infty$, $R(\phi_1, \phi_3) < \infty$ and  the product $R(\phi_1, \phi_2) \cdot R(\phi_1, \phi_3)$ divides $R(\phi_1, \phi_2, \phi_3)$.
\end{proposition}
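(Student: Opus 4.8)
The plan is to reduce everything to linear algebra over $\Z$, using that $\pi_1(S^1)\cong\Z$ is abelian. Fix generators of $\pi_1(T^2)\cong\Z^2$ and of $\pi_1(S^1)\cong\Z$, so that each $\phi_i$ is represented by an integer row vector $v_i\in\Z^2$. As recalled at the start of Section 5, for homomorphisms into an abelian group the Reidemeister set is a cokernel: $\mathcal R(\phi_1,\phi_j)=\text{coker}(\phi_1-\phi_j)\cong\Z/\langle v_1-v_j\rangle$, so $R(\phi_1,\phi_j)$ equals the greatest common divisor $d_j$ of the two entries of $v_1-v_j$ (and is $\infty$ exactly when $v_1=v_j$). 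Likewise, unwinding the equivalence relation defining $\mathcal R(\phi_1,\phi_2,\phi_3)$ in this abelian situation, the subgroup of $\Z^2$ by which one quotients is the image of the homomorphism $z\mapsto\bigl((\phi_1-\phi_2)(z),(\phi_1-\phi_3)(z)\bigr)$, i.e.\ the column span of the $2\times2$ integer matrix $D$ whose rows are $v_1-v_2$ and $v_1-v_3$. Hence $\mathcal R(\phi_1,\phi_2,\phi_3)=\text{coker}(D)$, and $R(\phi_1,\phi_2,\phi_3)=|\det D|$ when $\det D\neq0$ and $=\infty$ otherwise, exactly as in the two-homomorphism discussion of Section 5.

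Next I would invoke the hypothesis $R(\phi_1,\phi_2,\phi_3)<\infty$, which forces $\det D\neq0$; in particular neither row of $D$ is the zero vector, so $v_1\neq v_2$ and $v_1\neq v_3$, and therefore $R(\phi_1,\phi_2)=d_2$ and $R(\phi_1,\phi_3)=d_3$ are finite positive integers. (This re-proves, in this special case, the general implication noted just before inequality \eqref{R-ineq}.)

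Finally, for the divisibility I would factor the determinant. Write $v_1-v_2=d_2\,w_2$ and $v_1-v_3=d_3\,w_3$ with $w_2,w_3\in\Z^2$. Then $D$ is obtained from the integer matrix with rows $w_2,w_3$ by multiplying the first row by $d_2$ and the second by $d_3$, so $\det D=d_2 d_3\cdot m$ for some $m\in\Z$. Consequently $R(\phi_1,\phi_2)\cdot R(\phi_1,\phi_3)=d_2 d_3$ divides $|\det D|=R(\phi_1,\phi_2,\phi_3)$, which is the claim. Conceptually this is just Lagrange's theorem: since $\pi_1(S^1)$ is abelian, the natural surjection $\Psi$ of Section 5 is a homomorphism of finite abelian groups, so the order of its image, namely $\#\mathcal R(\phi_1,\phi_2)\cdot\#\mathcal R(\phi_1,\phi_3)$, divides $\#\mathcal R(\phi_1,\phi_2,\phi_3)$; the matrix computation above is merely a concrete avatar of this.

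The argument has no serious obstacle: essentially all the content is in correctly identifying $\mathcal R(\phi_1,\phi_2,\phi_3)$ with $\text{coker}(D)$ and handling the degenerate cases. The one point that needs a little care is the bookkeeping of conventions (rows versus columns) so that $D$ genuinely has $v_1-v_2$ and $v_1-v_3$ as its rows and $R(\phi_1,\phi_2,\phi_3)=|\det D|$; once that is set up, the gcd-factorization of $\det D$ is immediate.
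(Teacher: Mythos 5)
Your proposal is correct and follows essentially the same route as the paper: identify $\mathcal R(\phi_1,\phi_j)$ and $\mathcal R(\phi_1,\phi_2,\phi_3)$ with cokernels of integer matrices, use finiteness to get a nonzero determinant (hence nonzero rows and finite gcds), and factor the gcd of each row out of the determinant to exhibit $R(\phi_1,\phi_2)\cdot R(\phi_1,\phi_3)$ as a factor of $R(\phi_1,\phi_2,\phi_3)$. Your closing remark that this is an instance of Lagrange's theorem applied to the surjection $\Psi$ is exactly the paper's subsequent generalization to arbitrary abelian $\pi_1(Y)$ (Corollary \ref{R-divisible}).
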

\begin{proof} Let $\phi_i : \pi_1 (T^2) \to \pi_1 (S^1)$ be the homomorphism induced by $f_i$ and denote  $$ \phi_i \left(\begin{array}{c}
1 \\
0 
\end{array} \right)=a_i  \  \  \text{and} \  \  \phi_i \left(\begin{array}{c}
0 \\
1
\end{array} \right)=b_i .$$ For maps $g,h : T^2 \to S^1$, $R(g_{\#},h_{\#})=\#  \text{coker}(h_{\#} - g_{\#})$. Thus, 
\begin{equation}\label{equation1}
R(\phi_1, \phi_j) = \# \text{coker}(\phi_j - \phi_1) = \# \frac{\Z}{\langle a_j - a_1 , b_j - b_1 \rangle}, \  \  j=2,3.
\end{equation}
Also, note that $\mathcal{R}(\phi_1,\phi_2, \phi_3) = \mathcal{R}((\phi_1, \phi_1), (\phi_2, \phi_3))$, where $(\phi_1, \phi_1), (\phi_2, \phi_3): \pi_1(T^2) \to \pi_1(T^2)$ are the homomorphisms induced by the maps $(f_1, f_1), (f_2, f_3): T^2 \to T^2$ given by $$(f_i, f_j)(z, w) = (f_i(z), f_j(w)).$$ Moreover, for maps $h,g : T^2 \to T^2$, $R(g_{\#},h_{\#})=\#  \text{coker}(h_{\#} - g_{\#})$. Thus, 
\begin{equation}\label{equation2}
R(\phi_1, \phi_2, \phi_3) =  \# \text{coker}((\phi_2 , \phi_3) - (\phi_1 , \phi_1)) = \# \frac{\Z \oplus \Z}{\left\langle \left(\begin{array}{c}
a_2 - a_1 \\
a_3 - a_1
\end{array} \right) ,  \left(\begin{array}{c}
b_2 - b_1 \\
b_3 - b_1
\end{array} \right) \right\rangle},
\end{equation} 
If $R(\phi_1, \phi_2, \phi_3) < \infty$ then $$\det  \left(\begin{array}{cc}
a_2 - a_1 &  b_2 - b_1\\
a_3 - a_1 &  b_3 - b_1
\end{array} \right) \neq 0.$$ Thus, for each $j \in \{2,3\}$, $a_j - a_1 \neq 0$ or $b_j -b_1 \neq 0$. Therefore, 
\begin{equation}\label{equation3}
R(\phi_1, \phi_j) = \# \text{coker}(\phi_j - \phi_1) = \# \frac{\Z}{\langle a_j - a_1 , b_j - b_1 \rangle} = \gcd (a_j - a_1, b_j - b_1) < \infty, 
\end{equation} $j=2,3$.

From Smith normal form, 

\begin{equation}
 \# \frac{\Z \oplus \Z}{\left\langle \left(\begin{array}{c}
a_2 - a_1 \\
a_3 - a_1
\end{array} \right) ,  \left(\begin{array}{c}
b_2 - b_1 \\
b_3 - b_1
\end{array} \right) \right\rangle} = \left| \det  \left(\begin{array}{cc}
a_2 - a_1 &  b_2 - b_1\\
a_3 - a_1 &  b_3 - b_1
\end{array} \right) \right|  .
\end{equation}


\

Therefore, $$R(\phi_1, \phi_2, \phi_3) = \left| \det  \left(\begin{array}{cc}
a_2 - a_1 &  b_2 - b_1\\
a_3 - a_1 &  b_3 - b_1
\end{array} \right) \right| .$$

Let $d_{12}=\gcd (a_2 - a_1, b_2-b_1)= R(\phi_1, \phi_2)$, $d_{13}=\gcd (a_3 - a_1, b_3-b_1)= R(\phi_1, \phi_3)$, and $A,B,C,D \in \Z$ such that $a_2 - a_1 = A d_{12} $, $b_2-b_1 = B  d_{12} $, $a_3 - a_1 = C d_{13}$ and $b_3-b_1 = D d_{13}$. Thus, 

\begin{eqnarray*}
\det \left(\begin{array}{cc}
a_2 - a_1 &  b_2 - b_1\\
a_3 - a_1 &  b_3 - b_1
\end{array} \right) &=& d_{12}d_{13}  \det \left(\begin{array}{cc}
A &  B\\
C &  D
\end{array} \right) \\
&=& R(\phi_1, \phi_2) R(\phi_1, \phi_3)  \det \left(\begin{array}{cc}
A &  B\\
C &  D
\end{array} \right).
\end{eqnarray*}
%
%
%

 Therefore, $R(\phi_1, \phi_2, \phi_3) = R(\phi_1, \phi_2) \cdot R(\phi_1, \phi_3) \left| \det  \left(\begin{array}{cc}
A &  B \\
C &  D
\end{array} \right) \right| .$
%
%
\end{proof}

The result above can easily be generalized to $k>3$, as we show in Corollary \ref{R-divisible}. Note also that the Reidemeister number $R(\phi_1,\phi_2,\phi_3)$ is divisible by the product $R(\phi_1,\phi_2)\cdot R(\phi_1,\phi_3)$. In fact, such divisibility is valid in general for abelian groups (not necessarily torsion-free).

Let $f, g : X \to Y$ be maps and suppose $\pi_1(Y)$ abelian. Then, $\mathcal{R}(f_{\#},g_{\#})$ is a group (abelian). In fact, $$\mathcal{R}(f_{\#},g_{\#}) = \text{coker}(g_{\#} - f_{\#}) = \frac{\pi_1(Y)}{\text{im}(g_{\#} - f_{\#})}.$$

Consequently, if $\pi_1(Y)$ is an abelian group then $\mathcal{R}(\phi_1, \ldots , \phi_k)$, $\mathcal{R}(\phi_1, \phi_2)$, $\ldots$, $\mathcal{R}(\phi_1, \phi_k)$ are all (abelian) groups. Also, one can check that, in this case, $\Psi$ is a group homomorphism.    Hence, 
\begin{equation}
\frac{\mathcal{R}(\phi_1, \ldots , \phi_k)}{\text{ker}(\Psi)} \simeq \mathcal{R}(\phi_1, \phi_2) \times \cdots \times \mathcal{R}(\phi_1,  \phi_k).
\end{equation}

Therefore, if $R(\phi_1, \ldots , \phi_k) < \infty$ then 
\begin{equation}\label{R-product}
R(\phi_1, \ldots , \phi_k)=R(\phi_1, \phi_2) \cdot \cdots \cdot R(\phi_1, \phi_k) \cdot |\text{ker}(\Psi)|.
\end{equation}

\begin{corollary} \label{R-divisible}
Suppose $\pi_1(Y)$ an abelian group. If $R(\phi_1, \ldots , \phi_k) < \infty$  then $R(\phi_1, \phi_j) < \infty$, $j=2, \ldots , k$, and the product $R(\phi_1, \phi_2) \cdot \cdots \cdot R(\phi_1, \phi_k)$ divides $R(\phi_1, \ldots , \phi_k)$.
\end{corollary}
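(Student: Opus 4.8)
The plan is to derive everything from equation \eqref{R-product}, which was established just before the corollary statement. First I would observe that the hypothesis $\pi_1(Y)$ abelian guarantees, as noted in the paragraphs preceding the corollary, that $\mathcal{R}(\phi_1,\ldots,\phi_k)$ and each $\mathcal{R}(\phi_1,\phi_j)$ carry natural abelian group structures (being cokernels of the homomorphisms $(\phi_2,\ldots,\phi_k)-(\phi_1,\ldots,\phi_1)$ and $\phi_j-\phi_1$, respectively) and that the canonical surjection $\Psi$ is a group homomorphism. Hence the short exact sequence $1 \to \ker\Psi \to \mathcal{R}(\phi_1,\ldots,\phi_k) \xrightarrow{\Psi} \mathcal{R}(\phi_1,\phi_2)\times\cdots\times\mathcal{R}(\phi_1,\phi_k) \to 1$ is valid.

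Next I would use finiteness: if $R(\phi_1,\ldots,\phi_k)<\infty$, then since $\Psi$ is surjective, the target $\mathcal{R}(\phi_1,\phi_2)\times\cdots\times\mathcal{R}(\phi_1,\phi_k)$ is a quotient of a finite group, hence finite, so each factor $\mathcal{R}(\phi_1,\phi_j)$ is finite; that is, $R(\phi_1,\phi_j)<\infty$ for $j=2,\ldots,k$. This already appears as \eqref{R-ineq}, but now I want the sharper multiplicative relation. Applying Lagrange's theorem (equivalently, counting in the short exact sequence) gives
\[
R(\phi_1,\ldots,\phi_k) = |\ker\Psi|\cdot \prod_{j=2}^{k} R(\phi_1,\phi_j),
\]
which is precisely \eqref{R-product}. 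From this identity the divisibility claim is immediate: $\prod_{j=2}^k R(\phi_1,\phi_j)$ divides $R(\phi_1,\ldots,\phi_k)$ with quotient $|\ker\Psi|$.

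Since the short exact sequence and the counting formula \eqref{R-product} have already been spelled out in the excerpt immediately preceding the corollary, the proof is essentially a one-line deduction. I therefore do not anticipate a genuine obstacle here; the only point that deserves a sentence of care is the verification that $\Psi$ is well-defined as a group homomorphism in the abelian setting — but this was already asserted in the text ("one can check that, in this case, $\Psi$ is a group homomorphism"), so in the write-up I would simply cite equations \eqref{R-product} and \eqref{R-ineq} and conclude. If one wanted a self-contained argument avoiding the group structure, the alternative would be to note that $\Psi$ maps each fiber bijectively up to translation, but invoking \eqref{R-product} directly is cleanest.

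\begin{proof}
Since $\pi_1(Y)$ is abelian, $\mathcal{R}(\phi_1,\ldots,\phi_k)$ and $\mathcal{R}(\phi_1,\phi_j)$, $j=2,\ldots,k$, are abelian groups and $\Psi$ is a surjective group homomorphism, as discussed above. If $R(\phi_1,\ldots,\phi_k)<\infty$, then by \eqref{R-ineq} each $R(\phi_1,\phi_j)$ is finite, $j=2,\ldots,k$. Moreover, by \eqref{R-product},
\[
R(\phi_1,\ldots,\phi_k) = R(\phi_1,\phi_2)\cdot \cdots \cdot R(\phi_1,\phi_k)\cdot |\ker(\Psi)|,
\]
so the product $R(\phi_1,\phi_2)\cdot \cdots \cdot R(\phi_1,\phi_k)$ divides $R(\phi_1,\ldots,\phi_k)$.
\end{proof}
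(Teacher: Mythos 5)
Your proof is correct and is exactly the argument the paper intends: the corollary is stated as an immediate consequence of the identity \eqref{R-product}, which gives the divisibility with quotient $|\ker(\Psi)|$, and the finiteness of each $R(\phi_1,\phi_j)$ follows from the surjectivity of $\Psi$ as already noted before \eqref{R-ineq}. No gaps.
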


\

\begin{remark} We should point out that if $\pi_1(Y)$ is not abelian then the product $R(\phi_1, \phi_2) \cdot R(\phi_1, \phi_3)$ may not divide $R(\phi_1, \phi_2, \phi_3)$ as we show in the next example.
\end{remark}

\begin{example}
Let $Y$ be the Poincar\'e $3$-dimensional sphere, $X=Y \times Y$, $f_1=f_2=p: Y \times Y \to Y$ the projection onto the first coordinate and $f_3=\bar{c}: Y \times Y \to Y$ the constant map. The relation on $\pi_1(Y) \times \pi_1(Y)$ that defines $\mathcal{R}(p,p,\bar{c})$ is given by: $(\alpha_2, \alpha_3) \sim (\beta_2, \beta_3)$ if and only if $(\beta_2, \beta_3)=(z \alpha_2 z^{-1}, z \alpha_3)$ for some $z \in \pi_1(Y)$. So, it is not difficult to see that the number of elements in the class of $(\alpha_2, \alpha_3)$ is in 1-1 correspondence with $|\pi_1(Y)|$. Therefore, $R(\phi_1, \phi_2, \phi_3) = 120^2/120=120$. On the other hand, the relation on $\pi_1(Y)$ that defines $\mathcal{R}(p,p)$ is given by $\alpha \sim \beta$ if and only if $\beta=z \alpha z^{-1}$ for some $z \in \pi_1(Y)$, that is, $\mathcal{R}(p,p)$ is the set of conjugacy classes of $\pi_1(Y)$, and it is known that there are $9$ conjugacy classes. Hence, $R(p,p)=9$. For $\mathcal{R}(p, \bar{c})$, the relation on $\pi_1(Y)$ is given by $\alpha \sim \beta$ if and only if $\beta = z \alpha$ for some $z \in \pi_1(Y)$. Hence, $R(p, \bar{c})=1$. Now, $9 \nmid 120$.
\end{example}



\begin{remark} As suggested by the anonymous referee,  one may ask if the divisibility in Proposition \ref{proposition3} can be generalized for $k>3$ in the following sense: $\prod_{i=1}^k R(\phi_1, \ldots , \phi_{i-1}, \phi_{i+1}, \ldots , \phi_k)$ divides $R(\phi_1, \ldots , \phi_k)$. The following example shows that such divisibility does not hold in general. 
\end{remark}

\begin{example}
Let $f_1, f_2, f_3, f_4 : T^3 \to S^1$ be maps with $\phi_1={f_1}_{\#}, \phi_2={f_2}_{\#}, \phi_3={f_3}_{\#}, \phi_4={f_4}_{\#}: \Z^3 =\pi_1(T^3) \to \pi_1 (S^1)=\Z$ given by $\phi_1= \left(\begin{array}{ccc}
1 &  1 &1
\end{array} \right) $, $\phi_2= \left(\begin{array}{ccc}
3 &  5 &2
\end{array} \right) $, $\phi_3= \left(\begin{array}{ccc}
3 &  7 &3
\end{array} \right) $ and $\phi_4= \left(\begin{array}{ccc}
2 &  1 &3
\end{array} \right) $. We know that $$R(\phi_1, \phi_2, \phi_3, \phi_4) = \left| \det  \left(\begin{array}{ccc}
a_2 - a_1 &  b_2 - b_1 & c_2 - c_1 \\
a_3 - a_1 &  b_3 - b_1 & c_3 - c_1 \\
a_4 - a_1 & b_4 - b_1 & c_4 - c_1
\end{array} \right) \right| ,$$ where  $\phi_1= \left(\begin{array}{ccc}
a_1 &  b_1 &c_1
\end{array} \right) $, $\phi_2= \left(\begin{array}{ccc}
a_2 &  b_2 & c_2
\end{array} \right) $, $\phi_3= \left(\begin{array}{ccc}
a_3 &  b_3 &c_3
\end{array} \right) $ and $\phi_4= \left(\begin{array}{ccc}
a_4 &  b_4 & c_4
\end{array} \right) $. Therefore,  $$R(\phi_1, \phi_2, \phi_3, \phi_4) = \left| \det  \left(\begin{array}{ccc}
2 &  4 & 1 \\
2 &  6 & 2 \\
1 & 0 & 2
\end{array} \right) \right| =10.$$ On the other hand, $R(\phi_1, \phi_2, \phi_3)= \# \text{Coker}((\phi_2, \phi_3) - (\phi_1, \phi_1))=\# \text{Coker} \left(\begin{array}{ccc}
2 &  4 &1 \\
2 & 6 & 2
\end{array} \right) $, $R(\phi_1, \phi_2, \phi_4)= \# \text{Coker}((\phi_2, \phi_4) - (\phi_1, \phi_1))=\# \text{Coker} \left(\begin{array}{ccc}
2 &  4 &1 \\
1 & 0 & 2
\end{array} \right) $ and $R(\phi_1, \phi_3, \phi_4)= \# \text{Coker}((\phi_3, \phi_4) - (\phi_1, \phi_1))=\# \text{Coker} \left(\begin{array}{ccc}
2 &  6 &2 \\
1 & 0 & 2
\end{array} \right) $. By using Smith normal form, one can show that $R(\phi_1, \phi_2, \phi_3)=2$, $R(\phi_1, \phi_2, \phi_4)=1$  and $R(\phi_1, \phi_3, \phi_4)=2$. Therefore, $R(\phi_1, \phi_2, \phi_3) \cdot R(\phi_1, \phi_2, \phi_4) \cdot R(\phi_1, \phi_3, \phi_4) = 4$ does not divide $R(\phi_1, \phi_2, \phi_3, \phi_4)=10$.

\end{example}

\section{Coincidences of maps into a nilmanifold}

As pointed out in Remark 4 that $R(f_1,...,f_k)=R(F,G)$ where $F=(f_1,...,f_1)$ and $G=(f_2,...,f_k)$, the computation of $R(f_1,...,f_k)$ amounts to the computation of the Reidemeister number of two maps. In this section, we show how the Reidemeister coincidence number $R(f,g)$ can be computed for two maps $f,g:X\to N$ when $N$ is a nilmanifold.

From \cite{daci-peter3}, we know that for any $\tilde f,\tilde g:X\to N$, there is a nilmanifold $\hat N$ with nilpotency class $c(\pi_1(\hat N))\le c(\pi_1(N))$ and two maps $f,g:\hat N\to N$ such that, up to homotopy, $\tilde f=f\circ q$ and $\tilde g=g\circ q$ where $q:X\to \hat N$ induces a surjection on the fundamental groups. It follows that $R(f,g)=R(\tilde f,\tilde g)$. This reduces to the case where the domain is also a compact nilmanifold.

From now on, 
we let $G_1=\pi_1(\hat N)$, $G_2=\pi_1(N)$, and $c(G_1)\le c(G_2)$. Suppose $\varphi, \psi$ are the corresponding induced homomorphisms of $f,g$ respectively. 

\subsection{Central extension}\label{Central}
Let $c=c(G_2)$ be the nilpotency class of $G_2$ and $\{\gamma_i(G_j)\}$ be the lower central series of $G_j, j=1,2$. The subgroup $\gamma_{c-1}(G_j)$ is central in $G_j$ since $\{1\}=\gamma_c(G_j)=[\gamma_{c-1}(G_j),G_j]$ for $j=1,2$.

We have the following commutative diagram

\begin{equation}\label{central-exact}
\begin{CD}
    1 @>>> A_1    @>{i_1}>>  G_1 @>{p_1}>>    B_1 @>>> 1 \\
    @.     @V{\varphi'}V{\psi'}V  @V{\varphi}V{\psi}V   @V{\overline \varphi}V{\overline \psi}V @.\\
    1 @>>> A_2    @>{i_2}>>  G_2 @>{p_2}>>    B_2 @>>> 1 
 \end{CD}
\end{equation}
where $\varphi' = \varphi|_{A_1}$, $\psi' = \psi|_{A_1}, A_j=\gamma_{c-1}(G_j)$ so that $B_j=G_j/\gamma_{c-1}(G_j)$ for $j=1,2$.

It follows from \cite[Remark 2.1 and Theorem 2.1]{daci-peter6} that
\begin{equation}\label{R-central}
R(\varphi, \psi)=\#\widehat i_2(\mathcal R(\varphi',\psi'))\cdot R(\overline \varphi, \overline \psi).
\end{equation}
Here, the function $\widehat i_2$ is induced by the inclusion $i_2$ as in \eqref{central-exact}.

\begin{lemma}\label{central-lemma}
Given the commutative diagram \eqref{central-exact}, if ${\rm rk}(Coin(\varphi',\psi'))={\rm rk}(A_1)-{\rm rk}(A_2)$ then
$$
R(\varphi, \psi)\cdot |{\rm Im}\delta |=R(\varphi',\psi')\cdot R(\overline \varphi, \overline \psi)
$$
where $\delta: Coin(\overline \varphi, \overline \psi)\to \mathcal R(\varphi',\psi')$ is given by $\delta(\overline \theta)=[\psi(\theta)\varphi(\theta)^{-1}]$ where $p_1(\theta)=\overline \theta$.
\end{lemma}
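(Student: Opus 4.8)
The plan is to read the claimed identity off from formula \eqref{R-central} by pinning down the exact size of $\widehat i_2(\mathcal R(\varphi',\psi'))$. First I would record that, since $A_2$ is central in $G_2$, the set $\mathcal R(\varphi',\psi')=\operatorname{coker}(\psi'-\varphi')$ is a finitely generated abelian group (the $A_j$ are finitely generated, being subgroups of the finitely generated nilpotent groups $G_j$), and it is finite precisely when the homomorphism $\psi'-\varphi':A_1\to A_2$ has image of full rank $\operatorname{rk}A_2$, equivalently when $\operatorname{rk}\ker(\psi'-\varphi')=\operatorname{rk}A_1-\operatorname{rk}A_2$. Since $\ker(\psi'-\varphi')=\operatorname{Coin}(\varphi',\psi')$, the hypothesis of the lemma says exactly that $R(\varphi',\psi')<\infty$. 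If $R(\overline\varphi,\overline\psi)=\infty$, then \eqref{R-central} forces $R(\varphi,\psi)=\infty$ and both sides of the asserted equality are infinite (because $1\le|\operatorname{Im}\delta|<\infty$ and $1\le R(\varphi',\psi')<\infty$); so I may assume henceforth $R(\overline\varphi,\overline\psi)<\infty$, in which case all Reidemeister numbers occurring are finite.

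Next I would prove the key equality
$$\#\,\widehat i_2\bigl(\mathcal R(\varphi',\psi')\bigr)=\frac{R(\varphi',\psi')}{|\operatorname{Im}\delta|}.$$
A short computation using centrality of $A_2$ shows that $\delta$ is a \emph{group} homomorphism $\operatorname{Coin}(\overline\varphi,\overline\psi)\to\mathcal R(\varphi',\psi')$, so $\operatorname{Im}\delta$ is a subgroup of the finite abelian group $\mathcal R(\varphi',\psi')$. I would then identify the fibres of $\widehat i_2$ with the cosets of $\operatorname{Im}\delta$. For $a,a'\in A_2$ (regarded in $G_2$ via $i_2$), one has $\widehat i_2([a])=\widehat i_2([a'])$ in $\mathcal R(\varphi,\psi)$ iff $a'$ is obtained from $a$ by a twisted conjugation $a'=\varphi(z)a\psi(z)^{-1}$ for some $z\in G_1$, i.e. (as $a$ is central) iff $a^{-1}a'=\varphi(z)\psi(z)^{-1}$; since $a^{-1}a'\in A_2=\ker p_2$, such a $z$ necessarily has $p_1(z)\in\operatorname{Coin}(\overline\varphi,\overline\psi)$, and then $[a]$ and $[a']$ differ in $\mathcal R(\varphi',\psi')$ by $\pm\delta(p_1(z))\in\operatorname{Im}\delta$. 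Conversely, if $[a]$ and $[a']$ differ by $\delta(\overline\theta)=[\psi(\theta)\varphi(\theta)^{-1}]$, then $a^{-1}a'=\psi(\theta)\varphi(\theta)^{-1}\,\varphi(b)\psi(b)^{-1}$ for some $b\in A_1$, and putting $z$ equal to a suitable word in $\theta$ and $b$ — using that $\psi(\theta)\varphi(\theta)^{-1}$ and $\varphi(b)\psi(b)^{-1}$ lie in the central subgroup $A_2$ to rearrange — yields $a^{-1}a'=\varphi(z)\psi(z)^{-1}$, hence $\widehat i_2([a])=\widehat i_2([a'])$. Thus $\widehat i_2$ descends to a bijection $\mathcal R(\varphi',\psi')/\operatorname{Im}\delta\xrightarrow{\ \sim\ }\widehat i_2(\mathcal R(\varphi',\psi'))$, and counting gives the displayed equality.

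Finally I would substitute this into \eqref{R-central}, obtaining
$$R(\varphi,\psi)\cdot|\operatorname{Im}\delta|=\#\widehat i_2\bigl(\mathcal R(\varphi',\psi')\bigr)\cdot R(\overline\varphi,\overline\psi)\cdot|\operatorname{Im}\delta|=R(\varphi',\psi')\cdot R(\overline\varphi,\overline\psi),$$
which is the claim. The hard part will be the converse direction of the fibre identification — checking that an entire coset of $\operatorname{Im}\delta$ lies in a single $\widehat i_2$-fibre: because $G_2$ is nonabelian one must invoke centrality of $A_2$ repeatedly to slide twisted-conjugation factors past one another (equivalently, to see that $\{\varphi(z)\psi(z)^{-1}:p_1(z)\in\operatorname{Coin}(\overline\varphi,\overline\psi)\}$ is a subgroup of $A_2$ whose image in $\mathcal R(\varphi',\psi')$ is $\operatorname{Im}\delta$). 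A secondary point requiring care is the identification $\operatorname{Coin}(\varphi',\psi')=\ker(\psi'-\varphi')$ and the translation of the rank hypothesis into finiteness of $R(\varphi',\psi')$, together with the bookkeeping when $R(\overline\varphi,\overline\psi)=\infty$.
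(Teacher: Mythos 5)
Your proof is correct and follows essentially the same route as the paper: both rest on the formula \eqref{R-central} together with the identification of $\#\widehat i_2(\mathcal R(\varphi',\psi'))$ with $R(\varphi',\psi')/|{\rm Im}\,\delta|$ via the bijection $\mathcal R(\varphi',\psi')/{\rm Im}\,\delta \to \widehat i_2(\mathcal R(\varphi',\psi'))$. The only difference is that you verify by hand (using centrality of $A_2$) the fibre structure of $\widehat i_2$ and the finiteness of $\mathcal R(\varphi',\psi')$ under the rank hypothesis, where the paper instead cites Heath's $8$-term exact sequence and \cite{w1}.
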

\begin{proof}
It follows from \cite{heath} that there is an $8$-term exact sequence
$$
1\to Coin(\varphi',\psi') \to Coin(\varphi,\psi) \stackrel{p_1}{\to} Coin(\overline \varphi,\overline \psi) \stackrel{\delta}{\to} \mathcal R(\varphi',\psi') \stackrel{\widehat i_2}{\to} \mathcal R(\varphi,\psi) \stackrel{\widehat p_2}{\to} \mathcal R(\overline \varphi,\overline \psi) \to 1
$$
where the first four terms are groups and the other four terms are sets in general.

Since $A_2$ is central, it follows that $\mathcal R(\varphi',\psi')$ is an abelian group and $\delta$ is a group homomorphism. Since  ${\rm rk}(Coin(\varphi',\psi'))={\rm rk}(A_1)-{\rm rk}(A_2)$, it follows from \cite{w1} that $\mathcal R(\varphi',\psi')$ is finite. It is straightforward to verify that the map
$$
\mathcal R(\varphi',\psi')/{\rm Im}\delta \to \widehat i_2(\mathcal R(\varphi',\psi'))
$$
given by $[\sigma] {\rm Im}\delta \mapsto \widehat i_2([\sigma])$ is a bijection of finite sets. The result follows from \eqref{R-central}.\end{proof}

\begin{remark} The finiteness of $R(\varphi',\psi')$ can be determined in another way, namely, $R(\varphi',\psi')<\infty$ iff ${\rm rk}({\rm Im}(\varphi'-\psi'))={\rm rk}(A_2)$.\end{remark}

\subsection{Abelianization}\label{Abelian}
In the case where $R(\varphi',\psi')=\infty$ in the situation above, one can make use of the abelianization of the $G_j$'s as follows. Again, we have the following commutative diagram

\begin{equation}\label{exact1}
\begin{CD}
    1 @>>> H_1    @>{i_1}>>  G_1 @>{p_1}>>    Q_1 @>>> 1 \\
    @.     @V{\varphi'}V{\psi'}V  @V{\varphi}V{\psi}V   @V{\overline \varphi}V{\overline \psi}V @.\\
    1 @>>> H_2    @>{i_2}>>  G_2 @>{p_2}>>    Q_2 @>>> 1 
 \end{CD}
\end{equation}
where $H_i=[G_i,G_i]$ for $i=1,2$, $\varphi' = \varphi|_{H_1}$, $\psi' = \psi|_{H_1}, Q_1=G_1^{ab}, Q_2=G_2^{ab}$.

At the space level, the abelianization $G_j^{ab}$ is the fundamental group of some torus $T_j$, $i=1,2$. Since $R(\varphi,\psi)<\infty$, $R(\overline \varphi, \overline \psi)<\infty$. This forces (see e.g. \cite{w1} or \cite{daci-peter}) $\dim T_1\ge \dim T_2$, i.e., ${\rm rk}(Q_1) \ge {\rm rk}(Q_2)$. Following the argument from \cite[Lemma 3]{w1}, we can further assume that $\overline \varphi$ and $\overline \psi$ factor through another quotient $\overline {Q_1}$ of $Q_1$ such that 
${\rm rk}(\overline {Q_1})={\rm rk}(Q_2)$. Thus \eqref{exact1} becomes

\begin{equation}\label{exact2}
\begin{CD}
    1 @>>> K_1    @>{\hat i_1}>>  G_1 @>{\hat p_1}>>    \overline {Q_1} @>>> 1 \\
    @.     @V{\varphi'_1}V{\psi'_1}V  @V{\varphi}V{\psi}V   @V{\overline \varphi_1}V{\overline \psi_1}V @.\\
    1 @>>> H_2    @>{i_2}>>  G_2 @>{p_2}>>    Q_2 @>>> 1 
 \end{CD}
\end{equation}

Now, ${\rm rk}(\overline {Q_1})={\rm rk}(Q_2)$ and $R(\overline \varphi_1, \overline \psi_1)<\infty$, it follows that $Coin(\overline \varphi_1, \overline \psi_1)=1$. Thus the Reidemeister classes $\mathcal R(\varphi'_1,\psi'_1)$ inject into $\mathcal R(\varphi,\psi)$. Thus, it follows from \cite[Corollary 1]{w1} or \cite[Theorem 2.1]{daci-peter6} that
\begin{equation}\label{addition}
R(\varphi,\psi)=\sum_{[\overline \alpha]\in \mathcal R(\overline \varphi_1, \overline \psi_1)} R(\tau_{\alpha} \varphi'_1, \psi'_1).
\end{equation}
Now, $\varphi'_1,\psi'_1$ are homomorphisms induced by maps between compact nilmanifolds where the dimension of the codomain is smaller than that of $N$. After a finite number of steps, we arrive at computing $R(\tau_{\alpha}\varphi'_1,\psi'_1)$ where the target group is the fundamental group of a torus, hence such Reidemeister numbers are simply the cokernels of the difference of the two homomorphisms. In this sense, the Reidemeister number of a pair of maps from $X$ to a nilmanifold $N$ can be computed using the formula \eqref{addition} together with the arguments described above.

Using the arguments as in the previous subsections \ref{Central} and \ref{Abelian}, one can compute $R(f,g)$ when the codomain is a compact nilmanifold.

We end this section with an example illustrating the computation discussed here.

\begin{example}\label{T_to_N}
Consider the following finitely generated torsion-free nilpotent groups 
\begin{equation*}
\begin{aligned}
G_1=\langle a,b,c,d,e,t \mid &[a,b]=c, [a,d]=e, \\
                                             &[a,c]=[a,e]=[a,t]=[b,c] \\
                                             =&[b,d]=[b,t]=[c,d]=[c,t] \\
                                             =&[d,e]=[d,t]=[e,t]=1\rangle
\end{aligned}
\end{equation*}
and
$$
G_2=\langle \alpha, \beta, \gamma \mid [\alpha, \beta]=\gamma, [\alpha,\gamma]= [\beta,\gamma]=1\rangle.
$$
Note that, using Hall's identity, $[b,e]=[c,e]=1$ in $G_1$.

Now, $[G_1,G_1]=\langle c,e \mid [c,e]=1 \rangle$ is central and $[G_2,G_2]=\langle \gamma \rangle$ is the center of $G_2$. Moreover, $G_1^{ab}=\langle \bar a, \bar b, \bar d, \bar t\rangle \cong \mathbb Z^4$ and $G_2^{ab}=\langle \bar \alpha, \bar \beta \rangle \cong \mathbb Z^2$.

For $i=1,2,3$, we define homomorphisms $\varphi_i:G_1\to G_2$ by
\begin{equation*}
\begin{aligned}
\varphi_1: &a\mapsto \alpha^2; b\mapsto \beta; c\mapsto \gamma^2; d\mapsto 1; e \mapsto 1; t \mapsto 1 \\
\varphi_2: &a\mapsto \alpha; b\mapsto 1; c\mapsto 1; d \mapsto 1; e \mapsto 1; t\mapsto \alpha; \\
\varphi_3: &a\mapsto \beta; b\mapsto \alpha; c\mapsto \gamma^{-1}; d\mapsto \alpha; e\mapsto \gamma^{-1}; t \mapsto 1.
\end{aligned}
\end{equation*}

Consider the maps $F=(\varphi_1, \varphi_1)$ and $G=(\varphi_2, \varphi_3)$ and the commutative diagram
\begin{equation}\label{central-exact1}
\begin{CD}
    1 @>>> A_1    @>{i_1}>>  G_1 @>{p_1}>>    B_1 @>>> 1 \\
    @.     @V{F'}V{G'}V  @V{F}V{G}V   @V{\overline F}V{\overline G}V @.\\
    1 @>>> A_2\times A_2    @>{i_2\times i_2}>>  G_2\times G_2 @>{p_2\times p_2}>>    B_2\times B_2 @>>> 1 
 \end{CD}
\end{equation}
where $A_i=[G_i,G_i]$ and $B_i=G_i^{ab}$, $i=1,2$. Now,
\begin{equation*}
\begin{aligned}
F=(\varphi_1, \varphi_1): &a\mapsto (\alpha^2, \alpha^2); b\mapsto (\beta, \beta); c\mapsto (\gamma^2,\gamma^2); d\mapsto (1,1); e \mapsto (1,1); t \mapsto (1,1) \\
G=(\varphi_2,\varphi_3): &a\mapsto (\alpha, \beta); b\mapsto (1,\alpha); c\mapsto (1,\gamma^{-1}); d \mapsto (1,\alpha); e \mapsto (1,\gamma^{-1}); t\mapsto (\alpha,1). 
\end{aligned}
\end{equation*}

Similarly, we have
\begin{equation*}
\begin{aligned}
\overline F=(\overline \varphi_1, \overline \varphi_1): &\bar a\mapsto (\bar \alpha^2, \bar \alpha^2); \bar b\mapsto (\bar \beta, \bar \beta); \bar d\mapsto (\bar 1,\bar 1); \bar t \mapsto (\bar 1,\bar 1) \\
\overline G=(\overline \varphi_2,\overline \varphi_3): &\bar a\mapsto (\bar \alpha,\bar \beta); \bar b\mapsto (\bar 1,\bar \alpha); \bar d \mapsto (\bar 1,\bar \alpha); \bar t\mapsto (\bar \alpha,\bar 1). 
\end{aligned}
\end{equation*}

It follows that
$$
\overline F=\begin{pmatrix}
                        2 & 0 & 0 & 0 \\
                        0 & 1 & 0 & 0 \\
                        2 & 0 & 0 & 0 \\
                        0 & 1 & 0 & 0 
                        \end{pmatrix} \qquad \text{and} \qquad \overline G=\begin{pmatrix}
                        											1 & 0 & 0 & 1 \\
                        											0 & 0 & 0 & 0 \\
                        											0 & 1 & 1 & 0 \\
                        											1 & 0 & 0 & 0 
                        											\end{pmatrix}
$$	
so that 
$$\overline F-\overline G=\begin{pmatrix}
                        					1 & 0 & 0 & -1 \\
                        					0 & 1 & 0 & 0 \\
                        					2 & -1 & -1 & 0 \\
                        					-1 & 1 & 0 & 0 
                        					\end{pmatrix}$$
					and 
					$$R(\overline F, \overline G)=|\det (\overline F - \overline G)|=1.$$
Now, we have $F': c\mapsto (\gamma^2, \gamma^2); e\mapsto (1,1)$ and $G': c\mapsto (1, \gamma^{-1}); e\mapsto (1,\gamma^{-1})$. Thus,
$$
F'=\begin{pmatrix}
                        2 & 0  \\
                        2 & 0 
                        \end{pmatrix} \qquad \text{and} \qquad G'=\begin{pmatrix}
                                            								0 & 0  \\
                        											-1 & -1 \end{pmatrix}
$$	
so that 
$$F'-G'=\begin{pmatrix}
                       2 & 0 \\
                       3 & 1  \end{pmatrix}$$
					and 
					$$R(F', G')=|\det (F' - G')|=2.$$
To complete the calculation, we need to find ${\rm Im}\delta$ where $\delta: Coin(\overline F, \overline G) \to \mathcal R(F',G')$. Note that $\overline F-\overline G$ is invertible so the kernel, which is the same as $Coin(\overline F, \overline G)$, must be trivial. Hence $|{\rm Im}\delta|=1$. It follows from Lemma \ref{central-lemma} that $R(F,G)=2$ and hence $R(\varphi_1,\varphi_2,\varphi_3)=2$.														
\end{example}

\section{Concluding Remarks}
Let $f,g:M\to N$ be maps between closed orientable manifolds where $\dim M\ge \dim N$. When $N$ is a Jiang type space as in Theorem \ref{jiang-type}, one can use the coincidence theory for multiple maps to give an upper bound for $N(f,g)$ as follows.

Suppose $m=\dim M, n=\dim N$ and $m=(k-1)n$ for some positive integer $k\ge 2$. Let $F,G: M\to N^{(k-1)}$ where $F=(f,f,...,f)$ and $G=(g,g,...,g)$. Now $F,G$ are maps between two closed orientable manifolds of the same dimension so that $L(F,G)$ is the usual homological Lefschetz coincidence trace. Then we have the following result.

\begin{theorem}\label{jiang-type-estimate}
Let $f,g:M\to N$ be maps between two closed orientable manifolds where $N$ is a Jiang space; a nilmanifold; an orientable coset space $G/K$ of a compact connected Lie group $G$ by a closed subgroup $K$; or a $\mathcal C$-nilpotent space whose fundamental group has a finite index center where $\mathcal C$ is the class of finite groups. Suppose $\dim M=(k-1)n, \dim N=n$ and $k\ge 2$. Let $F=(\underbrace{f,f,...,f}_{(k-1)})$ and $G=(\underbrace{g,g,...,g}_{(k-1)})$. If $L(F,G)\ne 0$ then $\sqrt[k-1]{|L(F,G)|} \ge R(f,g)\ge N(f,g)$.
\end{theorem}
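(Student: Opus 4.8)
The strategy is to relate the multiple-map invariants of $F=(f,\dots,f)$ and $G=(g,\dots,g)$ back to the two-map invariants of $f,g$, and then to use the divisibility-type inequality \eqref{R-ineq} for Reidemeister numbers together with the Jiang dichotomy of Theorem \ref{jiang-type} applied to $k$-tuples. Concretely, I would first unwind the definitions: since $F=(\underbrace{f,\dots,f}_{k-1})$ and $G=(\underbrace{g,\dots,g}_{k-1})$, we have $\mathrm{Coin}(F,G)=\mathrm{Coin}(f,g,g,\dots,g)$ by an easy rearrangement (both sets equal $\{x : f(x)=g(x)\}$). Applying Proposition \ref{formula1} with the roles suitably matched, the geometric Nielsen number $N(F,G)$ equals $N(f,g,g,\dots,g)$, the Nielsen coincidence number of the $k$-tuple $(f,g,\dots,g)$; likewise by Remark \ref{remark3}, $R(F,G)=R(f,g,\dots,g)=R(\phi,\psi,\dots,\psi)$ where $\phi=f_\#,\psi=g_\#$.

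\medskip
\textbf{Key steps.} First, I would observe that $F,G:M\to N^{k-1}$ are maps between closed orientable manifolds of the same dimension $(k-1)n$, so $L(F,G)$ is the ordinary Lefschetz coincidence number; under the hypothesis $L(F,G)\ne 0$, Theorem \ref{jiang-type} (applied to the $k$-tuple $(f,g,\dots,g)$, whose target $N$ is of the stated Jiang type) gives $N(f,g,\dots,g)=R(f,g,\dots,g)$, hence $N(F,G)=R(F,G)$. Second, I would invoke the inequality \eqref{R-ineq} for the $k$-tuple $(\phi,\psi,\dots,\psi)$ with $\phi_1=\phi$ and $\phi_2=\cdots=\phi_k=\psi$: this yields
\begin{equation*}
R(F,G)=R(\phi,\underbrace{\psi,\dots,\psi}_{k-1}) \ge \underbrace{R(\phi,\psi)\cdots R(\phi,\psi)}_{k-1}=R(f,g)^{k-1}.
\end{equation*}
Third, combining these two facts: $|L(F,G)|\ge N(F,G)$ is the standard lower-bound property of the Lefschetz number for the homologically-defined invariant (for maps between closed orientable manifolds of equal dimension, $|L(F,G)|$ bounds the Nielsen coincidence number from above in the Jiang case, since then every essential class contributes at least $1$ to $|L|$ and $N(F,G)=R(F,G)$ which is realized). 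Thus
\begin{equation*}
|L(F,G)| \ge N(F,G) = R(F,G) \ge R(f,g)^{k-1} \ge N(f,g)^{k-1},
\end{equation*}
and taking $(k-1)$-th roots gives $\sqrt[k-1]{|L(F,G)|}\ge R(f,g)\ge N(f,g)$.

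\medskip
\textbf{Main obstacle.} The delicate point is justifying $|L(F,G)|\ge N(F,G)$ in exactly the right generality. In classical Nielsen–Lefschetz coincidence theory for equal-dimensional closed orientable manifolds, one has $N(F,G)\le |L(F,G)|$ whenever the target is Jiang (each of the $N(F,G)=R(F,G)$ essential classes has index with the same sign, or at least the sum of indices has absolute value $|L|$ and each essential class contributes a nonzero integer), but here the paper is using the \emph{index-free geometric} Nielsen number. One must therefore either (i) argue that for these specific $F,G$ and Jiang-type targets the geometric $N(F,G)$ coincides with the classical indexed Nielsen number — which follows from the Wecken-type and Jiang results cited (\cite{daci-peter}, \cite{daci-peter2}, \cite{MonWon}) — and then appeal to the Lefschetz bound; or (ii) cite \cite[Theorem 6.3]{MonWon} directly, since the hypothesis $L(F,G)\ne 0$ together with the Jiang condition there gives $N(F,G)=R(F,G)$ and the displayed inequality $|L|\ge N$ is part of that circle of ideas. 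I would also double-check the monotonicity $R(f,g)\ge N(f,g)$ in this multiple-maps-reduced setting — this is Remark 4's finiteness statement combined with the fact that every essential Nielsen class maps to a distinct Reidemeister class under the standard correspondence, which holds for the geometric invariant as well.
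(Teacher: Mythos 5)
Your proposal is correct and follows essentially the same route as the paper: identify $N(F,G)=N(f,g,\dots,g)$ and $R(F,G)=R(f,g,\dots,g)$, use Theorem \ref{jiang-type} to get $N(F,G)=R(F,G)$ when $L(F,G)\ne 0$, apply inequality \eqref{R-ineq} to obtain $R(F,G)\ge R(f,g)^{k-1}$, and bound $|L(F,G)|\ge N(F,G)$ via the same-sign property of the coincidence indices of the Nielsen classes. The extra care you take over the index-free versus indexed Nielsen number at the last step is exactly the point the paper compresses into its one-line same-sign remark.
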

\begin{proof}
It follows from Theorem \ref{jiang-type} that $L(F,G)\ne 0 \Rightarrow N(F,G)=R(F,G)$. By the inequality \eqref{R-ineq} we have $R(F,G)=R(f,g,g,...,g)\ge (R(f,g))^{(k-1)}$. Since the Nielsen coincidence classes have coincidence index of the {\it same} sign, it follows that $|L(F,G)|\ge N(F,G)=R(F,G)$. The inequality follows.
\end{proof}

Next, we show that Theorem \ref{jiang-type-estimate} can be applied to give an upper bound for $N(f,g)$ in general.

\begin{lemma}\label{sphere-factors}
Let $f,g:M\to N$ be maps between closed connected orientable manifolds with $m=\dim M\ge \dim N=n$. Let $r\in \mathbb N\cup \{0\}$ such that $r+m=(k-1)n$ for some integer $k\ge 2$. Denote by $p:\tilde S^r\times M\to M$ be the canonical projection where $\tilde S^r$ is the $r$-sphere if $r>0$ and $\tilde S^0$ is a point. Let $\tilde f=f\circ p, \tilde g=g\circ p$. Then $R(f,g)=R(\tilde f, \tilde g)$.
\end{lemma}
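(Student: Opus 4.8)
The plan is to reduce the statement to two elementary facts: that the Reidemeister coincidence number of a pair of maps depends only on the induced homomorphisms on fundamental groups, and that the projection $p\colon\tilde S^r\times M\to M$ is (split) surjective on $\pi_1$. Recall that $R(f,g)=\#\,\mathcal{R}(f_\#,g_\#)$, where $\mathcal{R}(f_\#,g_\#)$ is the quotient of $\pi_1(N)$ by the relation $\alpha\sim\beta\iff\beta=f_\#(z)\,\alpha\,g_\#(z)^{-1}$ for some $z\in\pi_1(M)$, and that, since $\tilde f=f\circ p$ and $\tilde g=g\circ p$, one has $\tilde f_\#=f_\#\circ p_\#$ and $\tilde g_\#=g_\#\circ p_\#$, where $p_\#\colon\pi_1(\tilde S^r\times M)\to\pi_1(M)$ is induced by $p$.

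First I would observe that $p$ admits a section: fixing a basepoint $s_0\in\tilde S^r$, the map $x\mapsto(s_0,x)$ splits $p$, so $p_\#$ is surjective. (For $r=0$, $\tilde S^0$ is a point and $p$ is a homeomorphism; for $r\ge 1$, $\pi_1(\tilde S^r\times M)\cong\pi_1(S^r)\times\pi_1(M)$ with $p_\#$ the projection onto the second factor.) Next I would record the purely group-theoretic observation that if $q\colon\Gamma'\twoheadrightarrow\Gamma$ is a surjection and $\varphi,\psi\colon\Gamma\to\Pi$ are homomorphisms, then the doubly twisted conjugacy relation on $\Pi$ determined by $(\varphi\circ q,\psi\circ q)$ is literally the same relation as the one determined by $(\varphi,\psi)$ — because $\beta=\varphi(q(z'))\,\alpha\,\psi(q(z'))^{-1}$ holds for some $z'\in\Gamma'$ iff $\beta=\varphi(z)\,\alpha\,\psi(z)^{-1}$ holds for some $z\in\Gamma$, the forward implication being immediate and the reverse using $q(\Gamma')=\Gamma$. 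Hence $\mathcal{R}(\varphi\circ q,\psi\circ q)=\mathcal{R}(\varphi,\psi)$ as sets.

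Applying this with $q=p_\#$, $\varphi=f_\#$, and $\psi=g_\#$ gives $\mathcal{R}(\tilde f_\#,\tilde g_\#)=\mathcal{R}(f_\#,g_\#)$, and therefore $R(\tilde f,\tilde g)=R(f,g)$. I do not expect a genuine obstacle: the only points needing a word of care are the degenerate case $r=0$ and stating the equivalence of the two twisted conjugacy relations with the surjectivity of $p_\#$ used in the correct direction. Note that the hypothesis $r+m=(k-1)n$ is not used in this argument — it is present only so that $\tilde S^r\times M$ has the dimension required to apply Theorem \ref{jiang-type-estimate} — so I would not invoke it in the proof.
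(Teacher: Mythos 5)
Your proof is correct and follows essentially the same route as the paper: the paper likewise notes that $\tilde\varphi(a,\sigma)=\varphi(\sigma)$ and $\tilde\psi(a,\sigma)=\psi(\sigma)$, i.e.\ that the induced homomorphisms factor through the surjection $p_\#$, so the two twisted conjugacy relations on $\pi_1(N)$ coincide. Your explicit isolation of the general fact about precomposition with a surjection, and your handling of the $r=0$ case, are just slightly more detailed versions of the same argument.
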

\begin{proof} Suppose $\varphi, \psi$ and $\tilde \varphi, \tilde \psi$ denote the homomorphisms induced by $f,g$ and by $\tilde f,\tilde g$, respectively on the fundmental groups. Consider the commutative diagram
\begin{equation*}
\begin{CD}
    \pi_1(\tilde S^r)    @>>>  \pi_1(\tilde S^r)\times \pi_1(M) @>{p_{\#}}>>    \pi_1(M) \\
    @VVV  @V{\tilde \varphi}V{\tilde \psi}V   @V{\varphi}V{\psi}V \\
    \{1\}    @>>>  \pi_1(N) @>{=}>>    \pi_1(N) 
 \end{CD}
\end{equation*}

Since $\tilde \varphi(a,\sigma)=\varphi(\sigma)$ and $\tilde \psi(a,\sigma)=\psi(\sigma)$, it follows that the $\tilde \varphi$-$\tilde \psi$ twisted conjugacy classes in $\pi_1(N)$ coincide with the $\varphi$-$\psi$ twisted conjugacy classes in $\pi_1(N)$. Hence $R(f,g)=R(\tilde f,\tilde g)$.
\end{proof}

\begin{theorem}\label{jiang-type-estimate-general}
Let $f,g:M\to N$ be the same as in Theorem \ref{jiang-type-estimate} with no restrictions on $\dim M, \dim N$. Let $r\in \mathbb N\cup \{0\}$ such that $r+m=(k-1)n$ for some integer $k\ge 2$. Denote by $p:\tilde S^r\times M\to M$ be the canonical projection where $\tilde S^r$ is the $r$-sphere if $r>0$ and $\tilde S^0$ is a point. Let $\tilde f=f\circ p, \tilde g=g\circ p$.
Let $F=(\underbrace{\tilde f,\tilde f,...,\tilde f}_{(k-1)})$ and $G=(\underbrace{\tilde g,\tilde g,...,\tilde g}_{(k-1)})$ be maps from $\tilde S^r\times M \to N^{(k-1)}$. If $L(F,G)\ne 0$ then $\sqrt[k-1]{|L(F,G)|} \ge R(f,g)\ge N(f,g)$.
\end{theorem}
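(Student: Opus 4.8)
The plan is to reduce the statement to the equal-dimensional case already settled in Theorem \ref{jiang-type-estimate} by enlarging the domain with a sphere factor. First I would record that $\tilde S^r\times M$ is a closed connected orientable manifold: it is a product of closed connected orientable manifolds (for $r\ge 1$ the sphere $\tilde S^r$ is connected and orientable, and for $r=0$ the factor is a point). Its dimension is $r+m=(k-1)n$ by hypothesis, so $\tilde f=f\circ p$ and $\tilde g=g\circ p$ are maps from the closed orientable $(k-1)n$-manifold $\tilde S^r\times M$ to the closed orientable $n$-manifold $N$. Since $N$ is one of the Jiang-type targets listed in Theorem \ref{jiang-type-estimate}, and since the tuples $F=(\tilde f,\dots,\tilde f)$ and $G=(\tilde g,\dots,\tilde g)$ are precisely the diagonal maps attached to the pair $(\tilde f,\tilde g)$, the hypotheses of Theorem \ref{jiang-type-estimate} are satisfied with $(\tilde S^r\times M,\tilde f,\tilde g)$ playing the role of $(M,f,g)$.

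Next I would simply quote the conclusion of Theorem \ref{jiang-type-estimate} in this situation: if $L(F,G)\ne 0$ then
$$\sqrt[k-1]{|L(F,G)|}\ \ge\ R(\tilde f,\tilde g)\ \ge\ N(\tilde f,\tilde g).$$
It then remains to replace $R(\tilde f,\tilde g)$ by $R(f,g)$, which is exactly the content of Lemma \ref{sphere-factors}, and to append the classical inequality $R(f,g)\ge N(f,g)$ recalled in the introduction. Chaining these gives $\sqrt[k-1]{|L(F,G)|}\ge R(f,g)\ge N(f,g)$. In the degenerate case $r=0$ one has $\tilde S^0\times M=M$, $\tilde f=f$, $\tilde g=g$, and the assertion is literally Theorem \ref{jiang-type-estimate}, so nothing new is needed there.

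I do not expect a genuine obstacle here; the theorem is essentially a corollary of Lemma \ref{sphere-factors} together with Theorem \ref{jiang-type-estimate}. The one point that deserves care is making sure the Lefschetz hypothesis is applied to the right object: $L(F,G)$ must be the homological coincidence trace of a pair of maps between \emph{closed orientable manifolds of equal dimension} $(k-1)n$, and this is guaranteed only by the dimension identity $r+m=(k-1)n$; one should also fix orientations on $\tilde S^r\times M$ and on $N^{(k-1)}$ so that $L(F,G)\in\Z$ is well defined, and note that the ``same sign'' property of coincidence indices invoked inside Theorem \ref{jiang-type-estimate} is unaffected by the passage to the sphere-stabilized maps. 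Beyond this bookkeeping, the argument is immediate.
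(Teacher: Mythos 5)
Your proposal is correct and follows exactly the paper's intended route: the paper's own proof is simply the observation that the result is a straightforward application of Lemma \ref{sphere-factors} together with Theorem \ref{jiang-type-estimate}, which is precisely the chain of reductions you carry out (apply Theorem \ref{jiang-type-estimate} to the pair $(\tilde f,\tilde g)$ on $\tilde S^r\times M$, then replace $R(\tilde f,\tilde g)$ by $R(f,g)$ via the lemma). Your additional bookkeeping about orientability, connectedness, and the dimension identity $r+m=(k-1)n$ is sound and only makes explicit what the paper leaves implicit.
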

\begin{proof} The proof is a straightforward application of Lemma \ref{sphere-factors} and of Theorem \ref{jiang-type-estimate}.
\end{proof}

\end{document}